\documentclass[11pt,reqno]{amsart}

\usepackage[papersize={8.5in,11in},width=6in,height=8in,centering]{geometry}
\usepackage{amsfonts}
\usepackage{amssymb}
\usepackage{amsthm}
\usepackage{amsmath}
\usepackage{graphicx}

\usepackage[all]{xy}
\usepackage{xcolor}

\newcommand{\Z}{\mathbb{Z}}

\newcommand{\R}{\mathbb{R}}

\theoremstyle{definition}
 \newtheorem{theorem}{\bf Theorem}[section]
 
 \newtheorem{lemma}[theorem]{Lemma}
 \newtheorem{corollary}[theorem]{Corollary}
 
\theoremstyle{definition}
 \newtheorem{example}[theorem]{Example}
 \newtheorem{remark}[theorem]{Remark}
 \newtheorem{definition}[theorem]{Definition}
 \newtheorem{proposition}[theorem]{Proposition}
\numberwithin{equation}{section}

\title[Contact toric 3-manifolds]{Contact 3-manifolds that admit a non-free toric action}


\author[A. Marinkovi\'c]{Aleksandra Marinkovi\'c} 

\email{aleksandra.marinkovic@matf.bg.ac.rs}

\author[L. Starkston]{ Laura Starkston}

\email{lstarkston@math.ucdavis.edu}

\begin{document}

\maketitle

\begin{abstract}

 We classify contact toric 3-manifolds up to contactomorphism, through explicit descriptions, building off of work by Lerman  \cite{Lerman2}. As an application, we classify all contact structures on 3-manifolds that can be realised as a concave boundary of linear plumbing over spheres.
 The later result is inspired by the work \cite{MNRSTW}.
\end{abstract}
\tableofcontents

\section{Introduction}
The study of
contact toric manifolds started  in the work of Banyaga and Molino (\cite{BM92, BM96}) who 
were the first to explore completely integrable systems in the contact case. 
They proved
an analog to Delzant's convexity theorem  \cite{Delzant} for particular contact toric manifolds, called of Reeb type. 
These are contact toric manifolds  that admit an invariant contact form whose corresponding Reeb vector field generates a circle subaction of the toric action. 
 Further, Boyer and Galicki in \cite{BG} extended their results by showing that  these contact toric manifolds are  reductions of odd dimensional spheres, analogous to the result that closed connected symplectic toric manifolds are reductions of complex spaces. 
 Finally, Lerman in \cite{Lerman2}, gave a complete list of contact toric 3-manifolds and classified the underlying smooth 3-manifolds. 
 
 When the toric action is free, Lerman proves the contact toric manifold must be $T^3$ with contact structure $\ker(\cos(2\pi n t)d\theta_1+\sin(2\pi nt)d\theta_2)$, $n\geq 1$. When the action is not free, Lerman's results show that such a contact toric manifold must have a certain form determined by two real numbers $t_1,t_2$ corresponding to angles of rational slope (see Definition~\ref{def:Yt1t2}). 
 The underlying 3-manifold is a lens space (possibly $S^3=L(1,0)$ or $S^1\times S^2=L(0,1)$). 
 
In this article, we complete the classification of contact 3-manifolds that admit a non-free toric action up to contactomorphism. We completely determine when two of the contact manifolds of Lerman's form are contactomorphic or not. We also prove which of these contact structures are tight or overtwisted and give explicit descriptions of these contact manifolds through alternate topological constructions. In the end we have a complete list of contactomorphism types which are realized as contact toric 3-manifolds with no repetition.

We divide our classification up to contactomorphism of contact 3-manifolds admitting a non-free toric action into the tight case and the overtwisted case. Here are the main results.


\begin{theorem} \label{thm:tight}
Given any lens space $L(k,l)$, up to contactomorphism, it has a unique tight contact structure that admits a toric action.

For a lens space $L(k,l)$ different from $S^1 \times S^2$ this tight contact structure is induced from the unique tight contact structure on $S^3$ by a $\mathbb Z_k$-quotient. Thus, this is the universally tight contact structure on $L(k,l)$. 
\end{theorem}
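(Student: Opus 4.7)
The plan is to prove existence of a distinguished universally tight toric contact structure on each lens space, and separately to show that all other toric contact structures on the same lens space are overtwisted.

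For existence, I would begin with $S^3$, where the standard contact structure $\xi_{std}=\ker(x_1dy_1-y_1dx_1+x_2dy_2-y_2dx_2)$ is invariant under the standard $T^2$ rotation action and hence toric. For a lens space $L(k,l)$ with $k\geq 2$, the free $\mathbb{Z}_k$-action giving the cover $S^3\to L(k,l)$ sits inside $T^2$, so it commutes with the toric action and preserves $\xi_{std}$. Thus $\xi_{std}$ descends to a $T^2$-invariant contact structure on $L(k,l)$; because its pull-back to the universal cover is $\xi_{std}$ (which is tight), the descended structure is universally tight. For $L(0,1)=S^1\times S^2$ the analogous toric model gives the unique tight structure on $S^1\times S^2$ directly.

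For uniqueness, I would identify each of these universally tight structures with a specific parameter pair $(t_1,t_2)$ in Lerman's family $Y_{t_1,t_2}$ from Definition~\ref{def:Yt1t2}, and then show that for any other pair $(t_1',t_2')$ whose underlying 3-manifold is the same lens space, the resulting contact structure is overtwisted. One concrete route exploits the $T^2$-invariance: the pre-Lagrangian tori in $Y_{t_1,t_2}$ project to segments in the moment image, and their characteristic foliations are determined explicitly by $(t_1,t_2)$. Outside of the universally tight parameter range, one should be able to locate a Legendrian curve on such a torus that bounds an embedded disk tangent to $\xi$ along its boundary. A cleaner alternative is to feed the classification into Honda's count of tight contact structures on lens spaces: since there is a unique universally tight structure up to contactomorphism (conjugation is a contactomorphism), matching the toric model to it via a classical invariant such as $d_3$ or the Euler class avoids an individual overtwisted-disk construction for every excluded $(t_1',t_2')$.

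The main obstacle is this uniqueness step. Constructing overtwisted disks by hand in $Y_{t_1',t_2'}$ requires a careful understanding of how the parameters $(t_1,t_2)$ control the twisting of $\xi$ along the fibered tori, and cross-checking this against when two different parameter pairs give contactomorphic (as opposed to merely diffeomorphic) manifolds. I expect the most efficient strategy to be a combination of Lerman's explicit normal form with Honda's classification, so that after exhibiting the universally tight toric model via the $S^3$-quotient, overtwistedness of every other toric model follows by a counting or classical-invariants argument rather than disk-by-disk analysis.
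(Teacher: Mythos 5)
Your existence argument (descending $\xi_{std}$ from $S^3$ through the $\mathbb{Z}_k\subset T^2$ action) is exactly what the paper does, and your implicit use of the criterion that $\xi_{t_1,t_2}$ is overtwisted iff $t_2-t_1>\pi$ (Theorem~\ref{thm:tight-ot}, which the paper cites from \cite{MNRSTW} rather than reproving via explicit overtwisted disks) is the right starting point. However, your uniqueness step contains a genuine gap: it is \emph{not} true that every parameter pair other than the distinguished one yields an overtwisted structure. A single lens space $L(k,l)$ arises as $Y(0,t_2)$ for several essentially different values of $t_2$ with $t_2\leq\pi$ --- namely the cones spanned by $(1,0)$ and $(l',k)$ for every $l'\equiv\pm l^{\pm 1}\pmod{k}$ (Reidemeister's classification) --- and by Theorem~\ref{thm:tight-ot} \emph{all} of these are tight. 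The entire content of the paper's uniqueness proof is to show that these several tight toric models are mutually contactomorphic: the cones $C(k,l+nk)$ and $C(k,r+nk)$ are carried to $C(k,l)$ by explicit integral linear transformations (giving equivariant contactomorphisms), and the remaining case $C(k,-l)$ is handled by the anti-holomorphic involution $(z_1,z_2)\mapsto(z_1,\bar z_2)$ on $S^3$, which descends to a diffeomorphism $L(k,l)\to L(k,-l)$ pulling $\alpha_{k,-l}$ back to $\alpha_{k,l}$. Your proposal never confronts this multiplicity.

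Your fallback route via Honda's classification could in principle be completed, but not as stated: classical invariants such as $d_3$ or the Euler class do not suffice to identify a tight contact structure on a lens space (Honda's count shows many tight structures can coexist, and they are not separated by homotopy-theoretic data alone). What would work is to show that \emph{every} tight toric contact structure on $L(k,l)$ --- i.e.\ every $Y(0,t_2)$ with $0<t_2\leq\pi$ realizing that lens space --- is a quotient of $(S^3,\xi_{std})$, hence universally tight, and then invoke uniqueness of the universally tight structure up to contactomorphism. But establishing that each such model is an $S^3$-quotient is precisely the moment-cone identification the paper carries out, so this route does not avoid the missing computation; it only repackages it.
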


Note that in general, most lens spaces admit non-contactomorphic tight contact structures. Our result shows that these additional (virtually overtwisted) contact structures are not realized as contact toric manifolds.

\begin{theorem} \label{thm:overtwisted}
Up to contactomorphism, there are exactly two overtwisted contact structures on any lens space $L(k,l)$ that admit a toric action. These are obtained by performing a half- and a full-Lutz twist to the unique tight contact structure on $L(k,l)$ that admits a toric action.
\end{theorem}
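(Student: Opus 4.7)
The plan is to invoke Eliashberg's classification theorem: on a closed $3$-manifold, overtwisted contact structures are determined up to isotopy by the homotopy class of the underlying oriented $2$-plane field, equivalently by the pair $(e(\xi),d_3(\xi))$. By Theorem~\ref{thm:tight}, among the Lerman models $Y_{t_1,t_2}$ representing a fixed lens space $L(k,l)$ there is a unique tight representative; all other parameter choices must therefore be overtwisted. The task reduces to showing that only two plane-field homotopy classes are realised by these overtwisted $Y_{t_1,t_2}$, and that these are the classes of the half- and full-Lutz twists of the tight representative.

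First, I would enumerate the parameter pairs $(t_1,t_2)$ producing $L(k,l)$, using the rational-slope description of Definition~\ref{def:Yt1t2} together with Lerman's underlying-manifold classification in \cite{Lerman2}. Second, I would compute the plane-field homotopy invariants $(e(\xi_{t_1,t_2}), d_3(\xi_{t_1,t_2}))$ in terms of the integer data extracted from $(t_1,t_2)$. Because the contact structure is $T^2$-invariant, the Euler class can be read off from the characteristic foliations on the slicing tori and from a toric trivialisation of $\xi$ over the complement of the two transverse exceptional circles. Third, I would compare these values with those obtained by applying Lutz twists to the tight $Y_{t_1,t_2}$: a half-Lutz along a transverse circle shifts $e(\xi)$ by twice the Poincar\'e dual of that circle, while a full Lutz preserves $e(\xi)$ and shifts $d_3$ by $1$. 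The aim is to verify that the invariants of the overtwisted $Y_{t_1,t_2}$ take exactly two values, matching the half- and full-Lutz modifications.

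The main obstacle will be step two, because $d_3$ is difficult to read off directly from toric data: it requires a compatible almost complex structure on an auxiliary $4$-manifold bounding $L(k,l)$ together with careful bookkeeping of signs and trivialisations. A cleaner workaround, and the one I would pursue, is to avoid computing $d_3$ altogether by exhibiting explicit contactomorphisms: locate a transverse toric circle inside a general $Y_{t_1,t_2}$, identify a standard Lutz neighbourhood of it, and verify that the Lutz-twisted structure is again a toric model with explicitly shifted parameters $(t_1',t_2')$. Once these ``toric Lutz twists'' are understood, the overtwisted $Y_{t_1,t_2}$'s with the same underlying lens space fall into orbits under this operation, and it remains to check that the orbits realise exactly two contactomorphism classes, distinguished from one another and from the tight structure by the Euler class of the plane field. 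Eliashberg's theorem then closes the argument.
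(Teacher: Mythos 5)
Your overall architecture coincides with the first half of the paper's proof: the ``toric Lutz twist'' workaround you describe is exactly their Proposition~\ref{prop: Lutz} (a half-Lutz twist along a transverse circle orbit replaces $t_2$ by $t_2+\pi$, a full Lutz twist by $t_2+2\pi$), and combining this with Lerman's observation that $\xi_{t_1,t_2}$ and $\xi_{t_1,t_2+2\pi n}$ are homotopic as plane fields and with Eliashberg's theorem does sort all overtwisted toric structures on a fixed $L(k,l)$ into at most two classes, the full-Lutz class $\xi_1$ and the half-Lutz class $\xi_2$. The gap is in your distinguishing step. First, a full Lutz twist does not ``shift $d_3$ by $1$'': by \cite[Lemma 3.17]{Geiges} it preserves the homotopy class of the plane field entirely (this is in fact what the argument needs, since it exhibits $\xi_1$ as the unique overtwisted structure homotopic to the tight one). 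Second, the Euler class cannot separate $\xi_1$ from $\xi_2$ in all cases: one has $e(\xi_2)-e(\xi_1)=2\,d_2(\xi_1,\xi_2)=2\,PD([L])$ with $L$ the transverse orbit, and while $PD([L])$ generates $H^2(L(k,l);\Z)\cong\Z_k$ and is therefore nonzero for every $k\geq 2$ (and for $S^1\times S^2$), its double vanishes when $k=2$, and both $e$ and $d_2$ vanish identically on $S^3$. The paper uses $d_2=PD([L])$ itself (via \cite[Proposition 4.3.3]{Geiges}), which handles every case except $S^3$; your Euler-class version additionally loses $L(2,l)$.

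For $S^3$ the $d_3$ computation cannot be avoided: by Eliashberg the two overtwisted structures are contactomorphic if and only if they are homotopic as plane fields, and on $S^3$ the only remaining homotopy invariant of a plane field is the $\pi_3(S^2)$-obstruction, so some computation at that level is forced. The paper carries it out by realising $\xi_1$ and $\xi_2$ as concave boundaries of the symplectic linear plumbings $(0,0,0,0,0,0)$ and $(0,0,0,0)$ and evaluating Gompf's invariant $\theta(\xi)=PD(c_1(X))^2-2\chi(X)-3\sigma(X)$, obtaining $\theta(\xi_1)=2$ and $\theta(\xi_2)=-2$. Your plan as stated therefore leaves the cases $S^3$ and $L(2,l)$ unresolved; replacing the Euler class by $d_2$ repairs the latter, but for $S^3$ a genuinely new ingredient along the lines of the paper's Step~3 is required.
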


We remark that there are no analogues to these results in higher dimensions. Namely, in \cite{AM} Abreu and Macarini constructed infinitely many tight contact structures on $S^3\times S^2$ that admit a non-free toric action, while in \cite{M15} it is shown that all contact toric manifolds in higher dimensions are fillable (thus tight).

Our focus on contact toric 3-manifolds was motivated by the work
 \cite{MNRSTW}, where the authors, together with J. Nelson, A. Rechtman, S. Tanny and L. Wang, showed the existence  of such contact toric structures on the concave boundary of certain linear plumbings. 
 
  \begin{theorem} \cite[Theorem 4.1]{MNRSTW}   \label{thm:plumbing_is_toric} The boundary of any linear plumbing over spheres with self-intersection numbers $(s_1, \ldots, s_n)$, where $s_i \geq0,$ for at least one index $i \in \{1, \ldots, n \}$, admits a concave Liouville structure inducing a contact structure admitting a non-free contact toric action.
   \end{theorem}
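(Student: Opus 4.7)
The plan is to realize the plumbing as a toric symplectic $4$-manifold $(P,\omega)$ and to exhibit a $T^2$-invariant Liouville primitive whose Liouville vector field is inward-transverse to $\partial P$. First I would encode the linear plumbing as a moment-polygon datum in $\mathbb{R}^2$: $n$ compact edges $E_1,\ldots,E_n$ corresponding to the spheres, with primitive inward integer normals $\nu_1,\ldots,\nu_n$ satisfying the Delzant smoothness relation $\nu_{i-1}+\nu_{i+1}=-s_i\,\nu_i$ for $2\leq i\leq n-1$, together with two end edges $E_0,E_{n+1}$ closing the polygon $\Delta$. Delzant's construction then produces a toric symplectic $4$-manifold diffeomorphic to the given plumbing, with $\partial P$ equal to the moment preimage of a curve in $\Delta$ meeting only the two end edges.

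Next, in action-angle coordinates where $\omega=dp_1\wedge d\theta_1+dp_2\wedge d\theta_2$, I would take the one-parameter family of Liouville primitives $\lambda_c=(p_1-c_1)d\theta_1+(p_2-c_2)d\theta_2$ based at a point $c\in\mathbb{R}^2$; the associated Liouville field $Z_c=(p_1-c_1)\partial_{p_1}+(p_2-c_2)\partial_{p_2}$ is the radial field emanating from $c$ in the moment image. Along any edge of $\Delta$, the vector $Z_c$ points into $\Delta$ iff $c$ lies on the outward side of the supporting line of that edge. The hypothesis $s_i\geq 0$ for some $i$ enters precisely at this point: it forces $\Delta$ to be non-strictly-convex at the $i$-th vertex, so one can place $c$ \emph{outside} $\Delta$ on the outward side of both end edges, making $Z_c$ inward-pointing along the full boundary $\partial P$ and giving the desired concave Liouville structure. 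By contrast, when all $s_i\leq -2$ the polygon is strictly convex at every vertex, $c$ is forced inside $\Delta$, and one obtains instead a convex (Stein) filling.

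With $c$ so chosen, the restriction $\alpha=\iota_{Z_c}\omega|_{\partial P}$ is a $T^2$-invariant contact form, so $\partial P$ carries a contact toric structure; non-freeness is automatic because $\partial P$ is a lens space and, by the dichotomy recalled in the introduction, a free toric action on a closed contact $3$-manifold occurs only on $T^3$. The main obstacle will be the third step: identifying the combinatorial region in $\mathbb{R}^2\setminus\Delta$ of admissible base points $c$ and showing it is non-empty precisely when some $s_i\geq 0$. This reduces to analyzing how the continued-fraction data $(s_1,\ldots,s_n)$ controls the arrangement of the edge normals of $\Delta$, and in particular showing that a non-negative $s_i$ creates the concave ``notch'' in $\Delta$ which accommodates an outward base point simultaneously compatible with both end edges.
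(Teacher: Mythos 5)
There is a genuine gap at the very first step. You assume the plumbing can be realized globally by Delzant's construction from an embedded convex polygon $\Delta\subset\mathbb{R}^2$ whose compact edges carry the normals $\nu_{i-1}+\nu_{i+1}=-s_i\nu_i$. Such a polygon does not exist in general: the normals determined by this recursion need not turn monotonically through less than $2\pi$, so the chain of edges can fail to bound any embedded convex (or even embedded) region. The simplest relevant example is $(0,0,0,0,0,0)$, used explicitly in this paper, where $\nu_{i+1}=-\nu_{i-1}$ forces the normals to repeat with period four and the ``developed'' moment image wraps through an angle of $5\pi/2$; no closed polygon in $\mathbb{R}^2$ realizes this, yet the theorem must cover it. Relatedly, ``closing the polygon with two end edges'' would produce a \emph{closed} toric $4$-manifold containing the plumbing, whose existence is itself an unproven (and generally false) claim. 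This is precisely why the construction of \cite[Theorem 4.1]{MNRSTW}, recalled in Section~\ref{sec:mnrstw}, does not use a single global moment polygon: it decomposes the plumbing into L-shaped toric pieces $(s_1,0),\ldots,(s_i,s_{i+1}),\ldots,(0,s_n)$ and glues their moment images abstractly via the $SL(2,\mathbb{Z})$ matrices $A_j$, allowing the total image to overlap itself.

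Your identification of where the hypothesis enters is also not quite right. The condition that \emph{some} $s_i\geq 0$ is not used to create a non-convex ``notch'' accommodating an external base point; it is used to choose the decomposition into L-shapes so that each pair $(a,b)$ has at least one non-negative entry, which in turn lets one place every local vertex consistently in the third quadrant so that the single radial Liouville field (based at the origin) is inward-pointing along the entire glued boundary. Your second and third steps (the $T^2$-invariant radial primitive $\lambda_c$, the half-plane criterion for inward-pointing along each edge, and the non-freeness argument via Lerman's dichotomy) are sound in spirit and do match the local analysis in the cited construction, but they cannot be run on a global polygon that does not exist; they must be carried out piece by piece on the glued L-shapes.
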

    
 As an application to this result we were able to detect when the contact structure on the boundary of the plumbing  $(s_1,  \ldots, s_n)$ is tight or overtwisted, simply by exploring the corresponding moment cones that are specified by self-intersection numbers 
 $s_1,  \ldots, s_n$. 
 
   \vskip2mm
    
In this article we prove the converse of Theorem   \ref{thm:plumbing_is_toric}. 
 \begin{theorem} \label{thm:toric_is_plumbing}
Any contact 3-manifold that admits a non-free toric action can be realised as a concave contact  boundary of a linear plumbing of spheres with self-intersection numbers $s_1, \ldots, s_n$ where $s_i \geq0,$ for at least one index $i \in \{1, \ldots, n \}.$

\end{theorem}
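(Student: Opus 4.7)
The plan is to combine the paper's refinement of Lerman's classification with the converse-realization result Theorem~\ref{thm:plumbing_is_toric}, and a toric blow-up trick to force the nonnegativity hypothesis on the self-intersections. By Lerman together with Theorems~\ref{thm:tight} and~\ref{thm:overtwisted}, every contact 3-manifold carrying a non-free toric action is contactomorphic to one of three types for some lens space $L(k,l)$: the unique tight toric contact structure, or its half- or full-Lutz twist. It therefore suffices to realize each of these three contactomorphism types, for every $(k,l)$, as the concave boundary of a linear plumbing $(s_1,\ldots,s_n)$ containing at least one $s_i\geq 0$.

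The dictionary between the two pictures is the moment cone. From Definition~\ref{def:Yt1t2}, $Y_{t_1,t_2}$ is recorded by its cone's two rational edges; labeling the consecutive primitive integer vectors along the boundary of this cone as $v_0, v_1, \ldots, v_n$, they satisfy a three-term relation $v_{i-1}+v_{i+1}=s_i v_i$. The integer sequence $(s_1,\ldots,s_n)$ so produced is a Hirzebruch--Jung-type expansion of the lens space data and is precisely the Euler number sequence of a linear plumbing. By the same moment cone computation that underlies Theorem~\ref{thm:plumbing_is_toric}, the concave contact boundary of this plumbing is contactomorphic to $Y_{t_1,t_2}$, provided the hypothesis $s_i\geq 0$ for some $i$ is met.

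The hypothesis is then secured by a toric blow-up: inserting a new primitive ray between two consecutive $v_i, v_{i+1}$ preserves the contact toric 3-manifold but alters the sequence by $(s_i,s_{i+1})\mapsto(s_i-1,\,0,\,s_{i+1}-1)$, introducing a zero entry. Applying this once whenever all initial entries are negative places the plumbing within the scope of Theorem~\ref{thm:plumbing_is_toric}. For the two overtwisted contactomorphism types, the moment cone is obtained from the tight one by extending its angular opening by $\pi$ (half-Lutz) or $2\pi$ (full-Lutz); translating the added edges into the three-term relation lengthens the $(s_i)$ sequence accordingly, and the same blow-up is used if necessary.

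The main obstacle is the verification that the moment cone read off from the plumbing side coincides, up to a $\textrm{GL}(2,\Z)$ change of basis, with the cone of the target $Y_{t_1,t_2}$, and that the blow-up normalisation preserves the contactomorphism type, in particular distinguishing the three classes (tight, half-Lutz, full-Lutz) by the total winding of the cone. Once this combinatorial correspondence is established, the theorem follows case by case from Theorem~\ref{thm:plumbing_is_toric}.
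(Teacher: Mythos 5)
Your overall architecture is compatible with the paper's: reduce via Lerman to realizing each $(Y(t_1,t_2),\xi_{t_1,t_2})$ (or each of the three contactomorphism types per lens space), read a linear plumbing off a subdivision of the moment cone, and handle the overtwisted cases by enlarging the cone angle. But the step where you ``secure the hypothesis'' $s_i\geq 0$ is where the real content lives, and your mechanism for it is wrong. A toric/symplectic blow-up at the intersection of two adjacent spheres changes $(s_i,s_{i+1})$ to $(s_i-1,\,-1,\,s_{i+1}-1)$, not $(s_i-1,\,0,\,s_{i+1}-1)$ --- the new sphere is a $(-1)$-sphere (see the remark at the end of Section~\ref{sec:toric_is_plumbing}). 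So blowing up can never convert an all-negative sequence into one with a nonnegative entry; the standard Hirzebruch--Jung expansion of a convex lens-space cone has all entries $\leq -2$ and stays that way under blow-ups. The move that does insert a $0$ (the reverse of Neumann's $0$-chain absorption) preserves only the smooth boundary, not the contact structure on the concave boundary: Lemma~\ref{lemma:two_zeros} shows that appending $(0,0)$ rotates the second ray by $\pi$, i.e.\ performs a half-Lutz twist. So normalizing the sequence by plumbing-calculus moves can silently change the contactomorphism type, and you give no argument that your normalization does not.

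What the paper does instead, and what is missing from your proposal, is a \emph{modified} continued fraction algorithm (Lemma~\ref{lemma:algorithm}): for a convex cone with second ray $(l,k)$, $l,k>0$, one expands $k/l=s_1-1/(s_2-1/(\cdots-1/s_n))$ with $s_1=\lfloor k/l\rfloor\geq 0$ and $s_2,\dots,s_n\leq -2$, so the nonnegativity hypothesis is built in from the start rather than repaired afterwards; the sign constraints also pin down which of the two cones with the same slope data (convex vs.\ concave, angle $t_2$ vs.\ $t_2+2\pi$) is actually realized, an ambiguity you do not address. The remaining angles are handled by an $SL(2,\Z)$ shear (Lemma~\ref{lemma:sharp}) and by iterating the $(0,0)$-appending move to add multiples of $\pi$ (Lemma~\ref{lemma:two_zeros}), which is the concrete version of your unproved claim that ``extending the angular opening lengthens the sequence accordingly.'' As written, your proposal defers exactly these verifications to ``the main obstacle,'' so it does not yet constitute a proof.
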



Finally, we apply our classification results to the linear plumbings that admit a concave contact boundary. The statement does not involve any contact toric geometry, but the proof relies on it.

\begin{corollary} \label{cor}
Up to contactomorphism , there is unique tight and two overtwisted contact structures on any lens space $L(k,l)$ that can be realised as a concave contact boundary of a linear plumbing over spheres.
The tight one is the unique universally tight contact structure and  overtwisted contact structures are obtained by performing the half-Lutz twist and a full Lutz-twist to the tight one.
\end{corollary}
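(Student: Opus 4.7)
The plan is to deduce this corollary from the classifications already established. My approach is to first establish an equivalence: up to contactomorphism, a contact structure on $L(k,l)$ is realizable as a concave contact boundary of a linear plumbing over spheres with at least one non-negative self-intersection if and only if it admits a non-free contact toric action. Given this equivalence, the corollary will follow immediately from Theorems \ref{thm:tight} and \ref{thm:overtwisted}.

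One direction of the equivalence, namely that every toric contact structure on $L(k,l)$ is realizable as such a concave boundary, is exactly the content of Theorem \ref{thm:toric_is_plumbing}. For the reverse direction, I would invoke the explicit construction in the proof of Theorem \ref{thm:plumbing_is_toric} from \cite{MNRSTW}: there, the concave Liouville structure on a linear plumbing is produced directly from the toric moment data determined by the self-intersection numbers $(s_1,\ldots,s_n)$, so the induced contact structure on the boundary is automatically toric. The main obstacle is to verify that \emph{every} realization of $L(k,l)$ as a concave contact boundary of a linear plumbing is contactomorphic to one coming from this explicit toric construction, and not merely that at least one toric realization exists; this will require a rigidity argument for concave contact fillings in the specific setting of linear plumbings over spheres, presumably exploiting the combinatorics of the self-intersection data.

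With the equivalence in hand, the corollary is immediate. Theorem \ref{thm:tight} provides a unique tight contact structure on $L(k,l)$ admitting a toric action, and it is the universally tight one; Theorem \ref{thm:overtwisted} provides exactly two overtwisted contact structures admitting a toric action on $L(k,l)$, obtained by performing the half- and full-Lutz twists to the tight toric contact structure. Translating through the equivalence established above gives exactly one tight and exactly two overtwisted contact structures on $L(k,l)$ realizable as a concave contact boundary of a linear plumbing, with the descriptions claimed in the statement.
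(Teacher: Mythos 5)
Your proposal follows the paper's intended argument exactly: the corollary is obtained by combining Theorem~\ref{thm:plumbing_is_toric} and Theorem~\ref{thm:toric_is_plumbing} to identify the contact structures on $L(k,l)$ realizable as concave boundaries of linear plumbings with those admitting a non-free toric action, and then invoking the classifications of Theorems~\ref{thm:tight} and~\ref{thm:overtwisted}. The ``rigidity'' issue you flag does not actually need to be resolved under the paper's reading of the statement, since the contact structure on the concave boundary is the one induced by the invariant Liouville structure constructed in Theorem~\ref{thm:plumbing_is_toric}, which is toric by construction rather than by an a posteriori uniqueness argument.
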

  
 
 \subsection*{Acknowledgments}
 The authors are grateful to Miguel Abreu and Klaus Niederkrüger for helpful conversations and the organizers of the Workshop on Symplectic Topology at the University of Belgrade in 2024. AM was partially supported by the Ministry of Education, Science and Technological Development, Republic of Serbia, through the project 451-03-66/2024-03/200104. LS was supported by NSF DMS 2042345 and a Sloan Fellowship.

\section{Preliminaries} \label{sec:intro}

A contact structure $\xi$ on a manifold $Y^{2n+1}$ is  a codimension 1 distribution  on $TY$ that is locally given as the kernel of a differentiable 1-form $\alpha$ where
$\alpha\wedge d\alpha^n$ nowhere vanishes. If such a 1-form $\alpha$ is globally defined, then it is called a contact form and 
a contact structure $\xi=\ker\alpha$  is  coorientable. 

In this article we address the dichotomy of overtwisted vs. tight contact structures on a 3-manifold.
 A contact structure on a 3-manifold is called overtwisted if it contains an overtwisted disc, i.e. an embedded disc that is tangent to the contact structure along the boundary. If it is not overtwisted, then a contact structure is called tight. According to Eliashberg (\cite{Eliashberg}), the classification of overtwisted contact structures on closed 3-manifolds is a purely topological, namely,  in each homotopy class of tangent 2-plane fields there is a unique overtwisted contact structure, up to isotopy. 
On the other hand,  tight contact structures on a manifold are in general more rigid and harder to classify. As shown by Honda in \cite{Honda}, the number of tight contact structures on a lens space $L(k,l)$ varies, depending on $k$ and $l$. However, there is unique universally tight contact structure on every lens space $L(k,l)$ and this is the contact structure obtained by quotienting the unique tight contact structure on $S^3.$
In Section \ref{sec:tight} we show that this is the unique tight contact structure on any lens space $L(k,l)$ that admits a toric action.

\subsection{Contact toric manifolds}  \label{sec:classification}
A  contact manifold $(Y^{2n-1},\xi)$  equipped with an effective  $T^n=(\mathbb R/ \mathbb Z)^n $ action that preserves the contact structure is called a \emph{contact toric manifold}. 
In general, a toric action may not preserve every contact form of an invariant contact toric structure $\xi$. However, if $\alpha$ is any contact form for $\xi,$ then
$\alpha_{inv}=\int_{\theta\in T^n}(\theta^*\alpha )d\theta$
is an invariant contact form.
Therefore, from now on, we always assume to work with an invariant contact form $\alpha. $

To every contact toric manifold $(Y^{2n-1}, \xi)$ with an invariant contact form $\alpha$ we associate $\alpha$-moment map  $H_{\alpha }=(H_1,\ldots, H_n): Y\to\mathbb R^{n}$  uniquely
defined by 
$$H_k=\alpha(X_k), \hskip2mm k=1,\ldots,n,$$
where $X_k$, $k=1,\ldots, n$ are infinitesimal generators of the toric action.
Moreover, the natural lift of the toric action on $(Y^{2n-1}, \xi)$  to the symplectization $(Y \times  \mathbb R_t, d(e^t\alpha))$ is  a toric action and therefore the symplectization is a symplectic toric manifold with a moment map  $H_{\xi}=e^tH_{\alpha}.$ The moment cone of a contact toric manifold is defined as a moment map image of the symplectization together with the origin. 
While the $\alpha$-moment map depends on the choice of an invariant contact form, the moment cone depends only on the contact structure. By performing an automorphism of the torus $T^n$ that acts on a contact manifold, the corresponding moment cone changes by an $SL(n,\mathbb Z)$ transformation. If there exists a diffeomorphism between two contact toric manifolds that preserves the contact structures as well as the toric actions  we say that these contact toric manifolds are  \emph{equivariantly contactomorphic}.

\begin{example}\label{ex:sphere}
The standard contact sphere $(S^{2n-1}, \ker(\alpha_{st}=\frac{i}{4}\sum_{k=1}^n(z_kd\bar{z}_k-\bar z_kdz_k))))$ equipped with the action
$$(e^{i2\pi t_1},\ldots,e^{in\pi t_n})*(z_1,\ldots ,z_n)\mapsto(e^{i2\pi t_1}z_1,\ldots ,e^{i2\pi  t_n}z_n)$$
is a contact toric manifold. The infinitesimal generators of the action are vector fields $X_k=2\pi i(z_k\frac{\partial}{\partial z_k}-\bar{z}_k\frac{\partial}{\partial \bar{z}_k}), k=1,\ldots,n$.
Therefore, 
\begin{equation}\label{standard H}
H_{\alpha_{st}}(z_1,\ldots, z_n)=\pi(|z_1|^2,\ldots,|z_n|^2)
\end{equation}
and the $\alpha_{st} $-moment map image is the standard $n$-simplex. The corresponding moment cone is equal to $\mathbb R^n_{\geq 0 }.$

\end{example}

If the toric action is free then any contact toric 3-manifold is equivariantly contactomorphic to
$(T^3,  \xi_n= \ker(\cos(nt)\,d\theta_1+\sin (nt)\, d\theta_2) ),$ for some $n  \in  \mathbb N$ (\cite[Theorem 2.18. (1)]{Lerman2}). For differing values of $n$, these contact structures were shown to be non-contactomorphic by Kanda~\cite{Kanda}, so the classification in the free case was completed by Lerman. Note that the corresponding moment cone is always $\mathbb R^2$, for all values of $n$.
If the toric action is non-free, then there is not a complete classification of which contact structures admit such an action up to contactomorphism. However, Lerman did show that all such contact manifolds fall in a certain class which we will now describe. In Section \ref{sec:tight} and Section \ref{sec:overtwisted} we will classify these contact structures explicitly and describe them through contact topological constructions.

\begin{definition} \label{def:Yt1t2}
Given two real numbers $0\leq t_1<t_2$ such that  $(\cos t_i,\sin t_i)$ is proportionate to $(m_i,n_i)\in \Z^2$, $i=1,2,$ we define a contact toric manifold $(Y(t_1,t_2),\xi_{t_1,t_2})$ as follows. Start from $T^2\times [0,1]$ with coordinates $(\theta_1,\theta_2,t)$, the contact structure $\ker(\cos (t_1(1-t)+t_2t)d\theta_1+\sin (t_1(1-t)+t_2t) d\theta_2)$, and the toric action given by the standard rotation of $T^2$ coordinates.
Collapse the  tori $T^2\times \{0\}$ and $T^2\times \{1\}$  along the circles of slopes $(-n_1,m_1)$ and $(n_2,-m_2)$ respectively.
 The quotient space inherits a contact toric structure and we denote this manifold  by $(Y(t_1,t_2),\xi_{t_1,t_2})$.
\end{definition}

The moment cone of $(Y(t_1,t_2),\xi_{t_1,t_2})$ is the union of the rays from the origin of angle $\theta$ for $t_1\leq \theta \leq t_2$. If $t_2-t_1\geq 2\pi$, this will be all of $\R^2$. If $t_2-t_1<2\pi$, we will see a cone bounded by a ray at angle $t_1$ and a ray at angle $t_2$ (see Figure~\ref{fig:rays}). The two circle orbits of $(Y(t_1,t_2),\xi_{t_1,t_2})$ have moment images on the rays at angle $t_1$ and $t_2$. The slopes which are collapsed in $T^2\times \{0\}$ and $T^2\times \{1\}$ giving rise to these circle orbits are the normal vectors to the rays at angle $t_1$ and $t_2$.
 
\begin{figure}
\centering
\includegraphics[width=12cm]{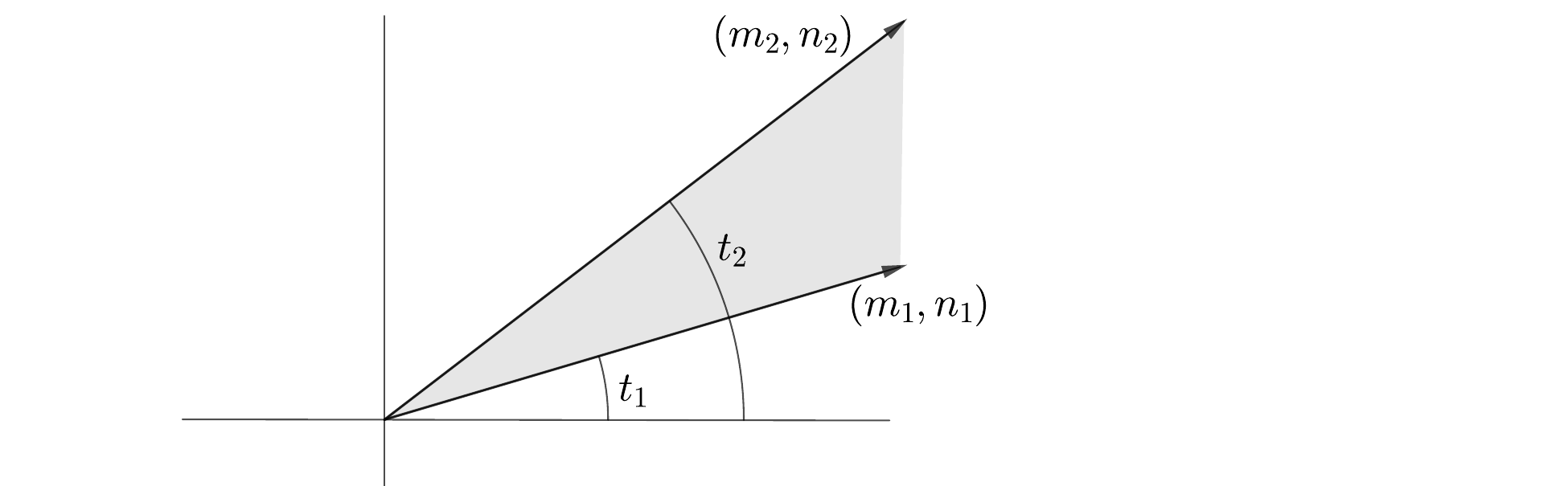}
\caption{A moment cone of $(Y(t_1,t_2),\xi_{t_1,t_2})$}
\label{fig:rays}
\end{figure}

\begin{theorem}(\cite[Theorem 2.18. (2)]{Lerman2})\label{theorem Lerman class}
Every compact connected contact toric manifold with a non-free toric action is equivariantly contactomorphic to  $(Y(t_1,t_2),\xi_{t_1,t_2})$ for some pair of real numbers $t_1,t_2$ with $0\leq t_1<2\pi$, $t_1<t_2$, such that  $(\cos t_i,\sin t_i)$ is proportionate to $(m_i,n_i)\in \Z^2$, $i=1,2,$
\end{theorem}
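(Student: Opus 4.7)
The plan is to reconstruct $(Y,\xi)$ from the data of its moment cone by building an explicit equivariant contactomorphism to a model $(Y(t_1,t_2),\xi_{t_1,t_2})$. Fix a compact connected contact toric 3-manifold $(Y,\xi)$ with non-free action and choose an invariant contact form $\alpha$. The first step is to show that the moment cone $C\subset \mathbb R^2$ is a closed convex cone bounded by two rays of rational slope. The non-free hypothesis provides a point with non-trivial isotropy; its stabilizer is a subcircle of $T^2$, and the corresponding orbit has moment image lying on a line through the origin. A local analysis at such orbits (as in Lerman) shows this line is an edge of $C$, and compactness together with connectedness of $Y$ forces $C$ to be bounded by exactly two such rays. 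Primitivity of the stabilizer circles in $T^2$ forces each edge direction to be proportional to a primitive integer vector $(m_i,n_i)$, giving the angles $0\leq t_1<t_2$ as in Definition~\ref{def:Yt1t2}.

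Next I would trivialize the free part. Let $Y^{\circ}$ denote the complement of the two singular orbits; on $Y^{\circ}$ the $T^2$ action is free, so $Y^{\circ}$ is a principal $T^2$-bundle over an open interval, hence globally a product $T^2\times I$ on which the action is standard rotation of the first factor. Using the defining identity $H_k=\alpha(X_k)$ together with invariance, one writes $\alpha$ in the trivialization as $f(s)\bigl(\cos\theta(s)\,d\theta_1+\sin\theta(s)\,d\theta_2\bigr)$ for some positive function $f$ and a monotonic angle function $\theta(s)$ with limits $t_1$ and $t_2$ at the two ends of $I$. Replacing $\alpha$ by the invariant form $\alpha/f$ (which preserves $\xi$) and reparametrizing $s\mapsto t\in(0,1)$ so that $\theta(s)$ becomes linear identifies the free part of $Y$ with the free part of $Y(t_1,t_2)$, equivariantly and as contact manifolds.

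To finish I would analyze neighborhoods of the two singular orbits via an equivariant contact neighborhood theorem: an invariant neighborhood of a singular circle orbit is determined up to equivariant contactomorphism by the slope of the collapsing circle in $T^2$, which is forced to be the primitive integer vector normal to the corresponding edge ray. These are exactly the vectors $(-n_1,m_1)$ and $(n_2,-m_2)$ prescribed in Definition~\ref{def:Yt1t2}. Patching the identification on the free part with the local models across small invariant neighborhoods then yields the global equivariant contactomorphism $(Y,\xi)\to (Y(t_1,t_2),\xi_{t_1,t_2})$.

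The main obstacle is global compatibility: the $T^2$-coordinates on $Y^{\circ}$ used to put $\alpha$ in the model form are a priori only determined up to an $SL(2,\mathbb Z)$ change of basis, and one must choose them so that the collapsing slopes at both ends match the prescribed normals simultaneously. This requires tracking the total $T^2$-monodromy as the angle sweeps from $t_1$ to $t_2$. The subtlety is most visible in the wrapped regime $t_2-t_1\geq 2\pi$, where $\theta$ must be treated as a multi-valued function and the excess winding $t_2-t_1-2\pi\lfloor(t_2-t_1)/(2\pi)\rfloor$ contributes extra Dehn twisting that has to be absorbed into the global coordinate choice before the endpoint slopes can be matched.
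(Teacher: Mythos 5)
First, note that the paper offers no proof of this statement: it is imported verbatim from Lerman \cite[Theorem 2.18 (2)]{Lerman2}, so there is no internal argument to compare against. Your outline does follow the same strategy as Lerman's actual proof (the orbit space of the $T^2$-action is a closed interval, one trivialises the free part as $T^2\times I$, puts the invariant contact form into a normal form governed by an angle function, and glues in local models along the two circle orbits), so the skeleton is the right one. As written, however, it contains genuine gaps.

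The most serious is the opening step. The moment cone of $(Y(t_1,t_2),\xi_{t_1,t_2})$ is \emph{not} in general a convex cone bounded by two rays: for $t_2-t_1>\pi$ it is non-convex, for $t_2-t_1\geq 2\pi$ it is all of $\R^2$, and the paper stresses in Section 4 that non-diffeomorphic contact toric manifolds share the moment cone $\R^2$. So "reconstructing $(Y,\xi)$ from its moment cone" cannot work; the correct complete invariant is the \emph{lifted} angle function $\theta(s)=\arg H(s)\in\R$ (equivalently the pair $(t_1,t_2)$ recording total variation, not just its image mod $2\pi$), which your second paragraph implicitly uses but your first paragraph contradicts. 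Second, the claim that the action is free on the complement $Y^{\circ}$ of the two circle orbits is asserted, not proved: a priori there could be exceptional orbits with finite stabiliser, and ruling these out (so that $Y^{\circ}\to I$ is an honest principal bundle) is a necessary step. Third, a general invariant $1$-form on $T^2\times I$ is $g_1(s)\,d\theta_1+g_2(s)\,d\theta_2+g_3(s)\,ds$; the $ds$-component is invisible to the moment map and must be removed by an equivariant Gray-stability argument before $\alpha$ can be written as $f(s)\bigl(\cos\theta(s)\,d\theta_1+\sin\theta(s)\,d\theta_2\bigr)$. Finally, the "main obstacle" you flag is largely illusory: the acting torus is fixed, so there is no $SL(2,\Z)$ ambiguity in the trivialisation --- the gauge group of a principal $T^2$-bundle over an interval consists of maps $I\to T^2$, all homotopic to translations --- and the collapsing slopes at the two ends are intrinsically the generators of the stabiliser subgroups, hence (since $\alpha(X_v)=0$ where $X_v$ vanishes) automatically the integer normals to the limiting rays. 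The real work lies in the three points above, not in matching monodromy.
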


As $SL(2, \mathbb Z)$ transformations of moment cones preserve the corresponding contact toric structures, a given contact toric manifold can be realized by multiple pairs $t_1,t_2$.

\begin{remark}\label{rem:lens_space} By performing an $SL(2, \mathbb Z )$ transformation of the moment cone, we may assume $t_1=0$. If  we denote by $(l,k)$ the slope of the second ray pointing out of the origin (namely $\tan(t_2)=k/l$) then the corresponding contact toric manifold is diffeomorphic to a lens space $L(k,l).$ To see this, notice that a lens space $L(k,l)$ can be obtained from
 $T^2\times [0,1]$ by 
collapsing the  tori $T^2\times \{0\}$ and $T^2\times \{1\}$  along the circles of slopes $(0,1)$ and $(k,-l)$. 
Further, the classical theorem by Reidmeister  states that $L(k,l)$ is diffeomorphic to $L(k',l')$ if and only if
$ k=k', l'\equiv \pm l^{\pm1} \hskip2mm (\textrm{mod})\hskip2mm  k $. That is,
 $L(k,l)$ is diffeomorphic to $L(k,-l),$ $L(k, l+nk),$ $L(k,-l+nk),$ $L(k,r+nk)$ and $L(k,-r+nk),$ for any $n\in\mathbb Z$, where $lr-ks=1,$ for some $s\in \mathbb Z.$
Thus, we are able to collect all the moment cones that correspond to $L(k,l).$ These are precisely all the cones whose first ray is $(1,0)$ and the second ray is 
$ (\pm(l+nk), \pm k )$ or $(\pm (r+nk),\pm k),$  for any $n\in\mathbb Z$, where $lr-ks=1,$ for some $s\in \mathbb Z.$
\end{remark}

Moreover, the numbers $t_1$ and $t_2$ are essential in determining when the corresponding contact structure is tight or overtwisted.

 \begin{theorem} \cite[Theorem 3.2.]{MNRSTW} \label{thm:tight-ot}
 $(Y(t_1,t_2),\xi_{t_1,t_2})$
 is overtwisted if and only if  $t_2-t_1> \pi$.
\end{theorem}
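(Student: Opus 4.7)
The plan is to prove the two implications separately: for the overtwisted direction, construct an explicit overtwisted disc near one of the collapsed Legendrian circles; for the tight direction, exhibit a symplectic filling.

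For the ``$\Rightarrow$'' direction, I would work in a neighborhood of the collapsed circle $C_0 \subset Y(t_1,t_2)$ coming from $T^2 \times \{0\}$. After an $SL(2,\mathbb{Z})$ change of basis (which preserves the contact toric type) we may assume $t_1 = 0$, so that $\phi(t) := t_2 t$ and the collapse direction at $t=0$ is $\theta_2$. Introducing polar-type coordinates $(r,\chi,\psi) = (\sqrt{t},\theta_2,\theta_1)$, the invariant form becomes $\alpha = \cos(\phi(r^2))\,d\psi + \sin(\phi(r^2))\,d\chi$ on the local model $D^2 \times S^1$, and this extends smoothly across $r = 0$ because $\sin(\phi(r^2)) \sim t_2\, r^2$. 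When $t_2 - t_1 > \pi$ there exists $r_0 \in (0,1)$ with $\phi(r_0^2) = \pi$. I would take the meridional disc $D = \{\psi = \psi_0,\; r \le r_0\}$, perturbed slightly near its boundary so that $\partial D$ is a non-degenerate Legendrian circle. The characteristic foliation is governed by $\alpha|_D = \sin(\phi(r^2))\,d\chi$, which has a unique elliptic singularity at $r=0$ (with leading form $t_2(x\,dy-y\,dx)$) and a closed orbit along $\partial D$, so $D$ is an overtwisted disc. The strict inequality $r_0 < 1$ is exactly the hypothesis $t_2 - t_1 > \pi$, and it keeps $D$ disjoint from the other collapsed circle $C_1$.

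For the ``$\Leftarrow$'' direction, assume $t_2 - t_1 \le \pi$. When $t_2 - t_1 < \pi$ the moment cone is a strictly convex rational polyhedral cone in $\mathbb{R}^2$, hence is the moment cone of a symplectic toric orbifold $W$ with a single cyclic quotient singularity modelled on $\mathbb{C}^2/\mathbb{Z}_k$ at the apex. The symplectization $(\mathbb{R}_{>0} \times Y(t_1,t_2), d(e^s\alpha))$ is equivariantly symplectomorphic to $W$ minus its apex, so truncating near the apex produces a Liouville (in fact Stein) filling whose contact boundary is equivariantly contactomorphic to $(Y(t_1,t_2),\xi_{t_1,t_2})$ by matching moment cones and invoking Theorem~\ref{theorem Lerman class}. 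Symplectic fillability implies tightness (Eliashberg-Gromov). The edge case $t_2 - t_1 = \pi$ collapses both boundary tori along parallel circles, so $Y(t_1,t_2) \cong S^1 \times S^2$; the resulting contact structure is the unique tight contact structure on $S^1 \times S^2$, Stein-filled by $S^1 \times D^3$.

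The main obstacle is the tight direction: identifying the contact boundary of the symplectic toric orbifold $W$ with $\xi_{t_1,t_2}$ of Definition~\ref{def:Yt1t2} requires careful book-keeping at the singular apex, and the degenerate case $t_2 - t_1 = \pi$ needs a separate argument since the moment image is a half-plane rather than the cone of an honest orbifold. A secondary technical point in the overtwisted direction is the explicit perturbation of $D$ near its boundary to produce a non-degenerate Legendrian $\partial D$ with $tb = 0$; this is standard but must be checked to confirm that $D$ remains embedded globally in $Y(t_1,t_2)$.
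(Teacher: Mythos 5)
A preliminary remark: the paper does not prove this statement itself --- it imports it from \cite[Theorem 3.2]{MNRSTW} --- so there is no internal proof to compare against; I evaluate your argument on its own terms.

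Your overtwisted direction is essentially correct. After normalizing $t_1=0$ (note that an $SL(2,\mathbb{Z})$ move changes the value of $t_2-t_1$ but preserves the condition $t_2-t_1>\pi$, since it preserves the number of half-turns swept by the family of rays), the disc $\{\psi=\psi_0,\ r\le r_0\}$ with $\phi(r_0^2)=\pi$ has $\alpha=-d\psi$ along its boundary, so it is tangent to $\xi$ along all of $\partial D$; that is precisely the definition of overtwisted disc used in Section~\ref{sec:intro}, and no boundary perturbation or Legendrian non-degeneracy is needed. The disc is embedded and, since $r_0<1$, stays inside the local model, so this half is fine.

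The genuine gap is in the tight direction. For $t_2-t_1<\pi$ and $k\ge 2$, the compact piece of your toric orbifold $W$ containing the apex is $D^4/\mathbb{Z}_k$, which has an isolated orbifold singularity: it is not a smooth manifold, hence not a Stein or symplectic filling in the sense required by the Eliashberg--Gromov theorem, and ``fillable implies tight'' does not apply as written. Two standard repairs: (i) bypass fillings of $L(k,l)$ altogether --- identify $(Y(0,t_2),\xi_{0,t_2})$ with $(L(k,l),\ker\alpha_{kl})=(S^3,\ker\alpha_{st})/\mathbb{Z}_k$ by matching moment cones as you propose (this is done in Section~\ref{sec:tight}), and then observe that an overtwisted disc, being simply connected, would lift to an embedded overtwisted disc in the standard tight $S^3$ (which \emph{is} filled by $D^4$); i.e.\ universally tight implies tight; or (ii) replace the orbifold point by the minimal resolution of the cyclic quotient singularity, a linear plumbing of spheres with self-intersections at most $-2$, which yields an honest symplectic filling. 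Separately, your treatment of $t_2-t_1=\pi$ is circular as written: you assert that the resulting structure on $S^1\times S^2$ is ``the unique tight contact structure,'' which presupposes tightness; what is needed is an explicit identification of $\xi_{0,\pi}$ with the convex boundary of the Weinstein structure on $S^1\times D^3$ (equivalently with $\ker(h\,d\theta+\tfrac{i}{4}(z\,d\bar z-\bar z\,dz))$ as in Section~\ref{sec:tight}), again via the moment cone and the uniqueness in Theorem~\ref{theorem Lerman class}. With these two repairs the strategy goes through.
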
 

This property will be essential in the proof of Theorem  \ref{thm:tight} and Theorem  \ref{thm:overtwisted}.

\subsection{Non-free contact toric 3-manifold as a concave boundary of a linear plumbing}\label{sec:mnrstw} We briefly recall the construction from \cite{MNRSTW} as it will be used to prove Theorem \ref{thm:toric_is_plumbing}.

We denote by $(s_1, \ldots, s_n)$ the linear plumbing of disk bundles over spheres with self-intersection numbers $s_1, \ldots, s_n.$ If  $s_i \geq0,$ for at least one index $i \in \{1, \ldots, n \}$ then such a plumbing can be equipped with a global symplectic toric structure with concave contact boundary, such that the spheres in the base of the plumbing correspond to the edges in the moment map image~\cite[Theorem 4.1.]{MNRSTW}. For the details of the construction we refer to \cite[Section 4]{MNRSTW}. The moment map image of the plumbing $(s_1,s_2)$, where $s_1 \geq0$ is shown in Figure  \ref{fig:l-shape} and it is often called an L-shape. The inner curve corresponds to the boundary of the plumbing and the two points on the rays correspond to singular toric orbits. Except end points of edges that correspond to fixed points of the toric action, all other points on the rays in the moment image correspond to circle orbits.
 
    \begin{figure}
\centering
\includegraphics[width=11cm]{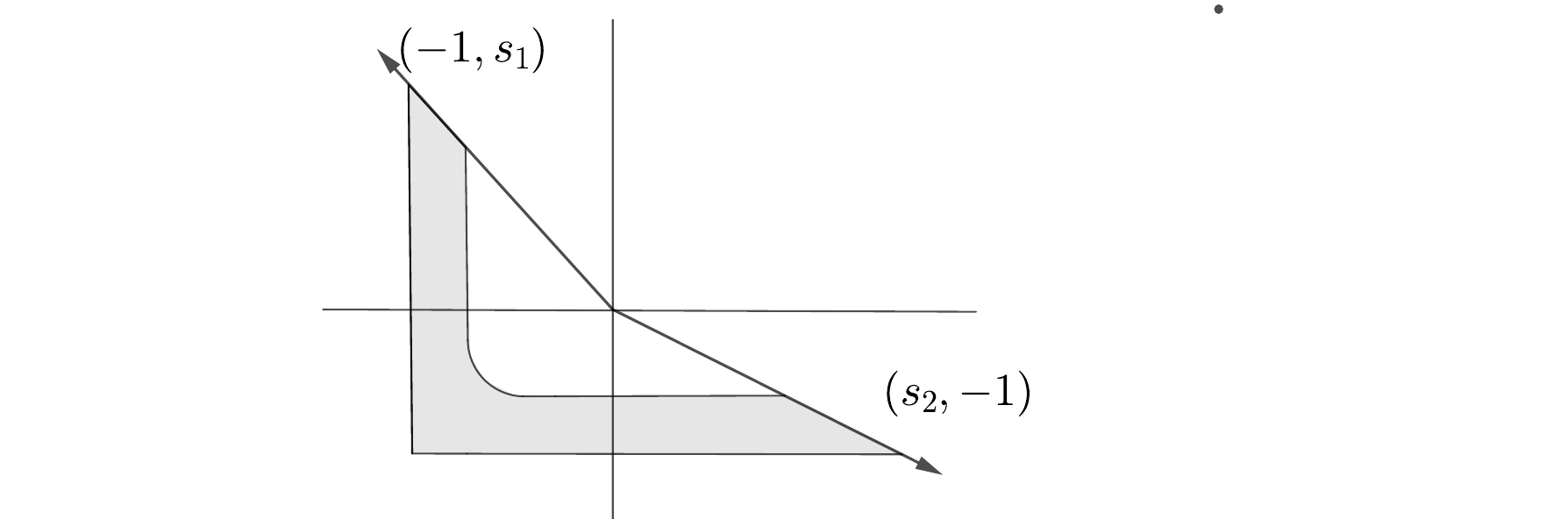}
\caption{L-shape- the moment map image of $(s_1,s_2)$}
\label{fig:l-shape}
\end{figure}      

The moment map image of the plumbing $(s_1, \ldots, s_n)$, where $s_i\geq 0$, is obtained by gluing the moment map images of the plumbings 
$$ (s_1,0),\ldots, (s_{i-1},0),(s_i,s_{i+1}),(0,s_{i+2}),\ldots, (0,s_n)$$
via transformations 
$A_j=\begin{bmatrix}
 -s_j & -1 \\
1 &  0
\end{bmatrix},$ for all $j=2,  \ldots, n-1,$ in the following way. The L-shape of $(0,s_n)$ is glued via $A_{n-1}$ to the L-shape of $(0,s_{n-1})$ (Figure \ref{fig:gluing}), where the points on the rays $(s_{n-1},-1)$ and $(-1,0)$ correspond to regular torus orbits, except for the points on edges that correspond to circles. In this way, the pre-image of the points on one ray is a solid torus. Next, the obtained moment map image is glued via $A_{n-2}$ to the L-shape of $(0,s_{n-2})$ and we continue until we glue all L-shapes to the L-shape $(s_1,0).$
In this total moment map image only the point on the first ray of $(s_1,0)$ and the point on the second ray of $(0,s_n)$ correspond to singular orbits as in the case of one L-shape. All points on the inner rays  correspond to regular (full-dimensional torus) orbits, except for the points on the interior of edges that correspond to circle orbits. Note that in order for these gluing maps to align the boundaries, the coordinates of the vertex in each L-shape must be chosen carefully. In~\cite[Theorem 4.1]{MNRSTW}, we give an explicit way to choose these coordinates consistently in the third quadrant. This relies on the assumption that in each L-shape $(a,b)$, at least one of $a$ and $b$ is non-negative.
    \begin{figure}
\centering
\includegraphics[width=14cm]{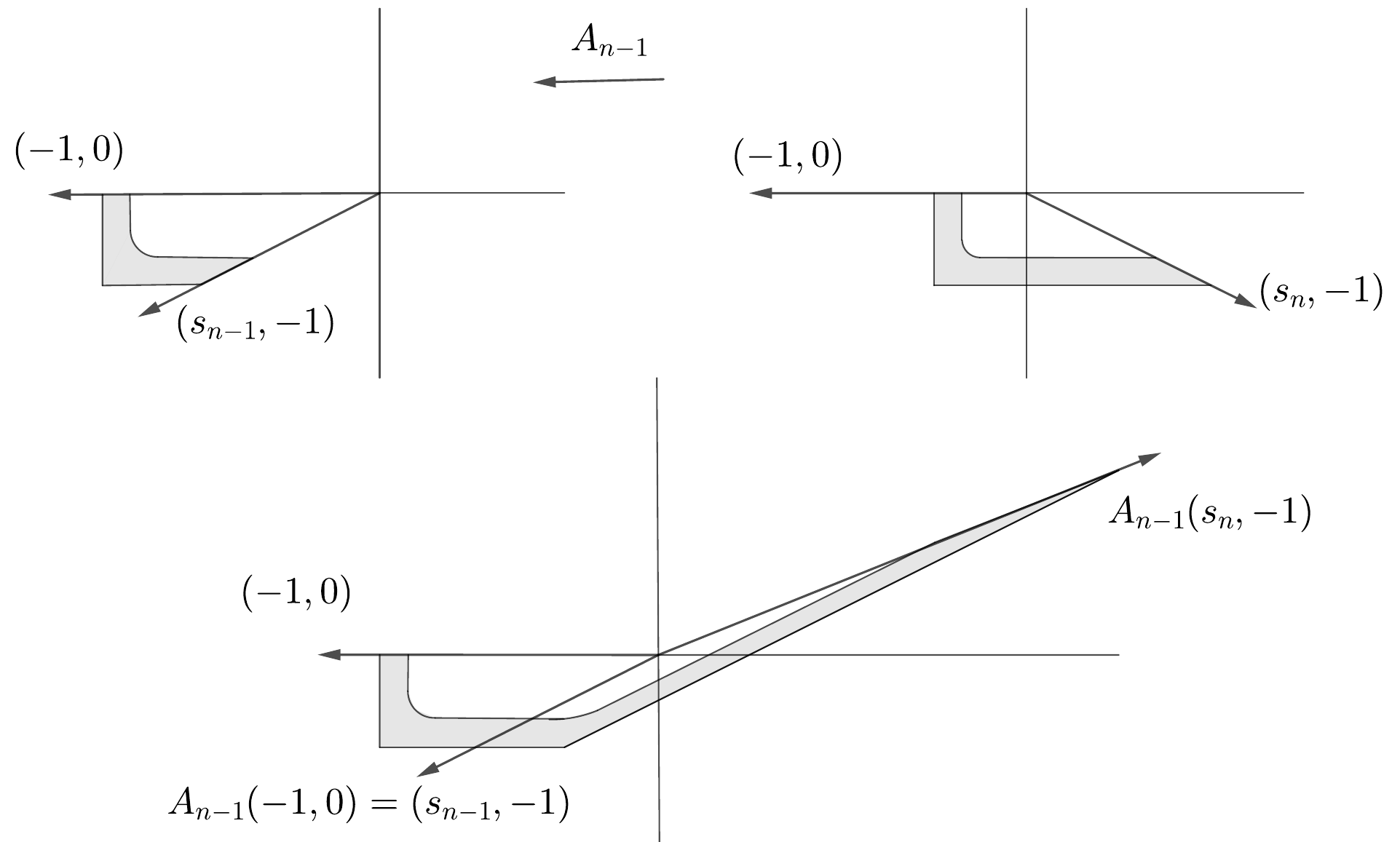}
\caption{Gluing of $(0,s_n)$ to $(0,s_{n-1})$}
\label{fig:gluing}
\end{figure}

Moreover, the plumbing $(s_1, \ldots, s_n)$, where $s_i \geq0,$ for at least one index $i \in \{1, \ldots, n \}$, admits a Liouville vector field defined near the boundary and pointing toward interior. This Liouville vector field is  invariant under the toric action and, therefore, the boundary admits a concave contact toric structure. The rays of the corresponding moment cone (pointing out of the origin) are given by

\begin{equation}\label{eq:rays}
R_1=(-1,s_1)\hskip2mm \textrm{and} \hskip2mm R_2=\begin{cases}
        (s_2,-1),&\mbox{ if $n=2$,  }\\  
  A_2  \cdots A_{n-1}(s_n,-1),&\mbox{ if $n\geq3,$}
 \end{cases}
\end{equation}
where 
$A_j=\begin{bmatrix}
 -s_j & -1 \\
1 &  0
\end{bmatrix},$ for all $j=2,  \ldots, n-1.$

\section{Tight contact toric structures}\label{sec:tight}

In this section we focus on contact toric 3-manifolds with tight contact structures. 

If the action is free then, as mentioned in Section \ref{sec:classification} any such manifold is equivariantly contactomorphic to
$(T^3,  \xi_n= \ker(\cos(nt)\,d\theta_1+\sin(nt)\, d\theta_2) ),$ for some $n  \in  \mathbb N.$ 

If the action is non-free, then, according to Theorem  \ref{thm:tight-ot}  the corresponding moment cone spans an angle $ \leq  \pi.$
In fact,  there is a bijection between such moment cones  (up to $SL(2,\mathbb Z )$-transformations) and tight contact manifolds with a non-free toric action (up to equivariant contactomorphisms).
We now describe this bijection explicitly. We suppose $t_1=0$ in all cases.

 \begin{itemize}

 \item The convex cone spanned by the rays $(1,0)$ and $(0,1)$ corresponds to 
$(L(1,0)=S^3, \ker(\alpha_{st}=\frac{i}{4}\sum_{k=1}^2(z_kd\bar{z}_k-\bar z_kdz_k))$ with the standard toric action
$$(e^{i2\pi t_1},e^{i2\pi t_2})*(z_1,z_2)\mapsto(e^{i2\pi t_1}z_1,e^{i2\pi  t_2}z_2).$$

 \item The convex cone spanned by the rays $(1,0)$ and $(l,k)$, for any $k \in  \mathbb N$ and any $l  \in  \mathbb Z$ corresponds to
 a lens space $L(k,l)$ with the following contact toric structure. Identify $L(k,l)$ with a quotient space of the unit sphere $S^3$ by the free $\mathbb Z_k$ action 
$$e^{i 2\pi /k}\ast(z_1,z_2)\to (e^{i 2\pi /k}z_1, e^{i 2\pi l/k}z_2).$$
As the contact form $\alpha_{st}$ on $S^3$ is invariant under this action,  it induces a contact form $\alpha_{kl}$ on $L(k,l).$ 
By reparametrising  the standard toric action  on $(S^3,\ker\alpha_{st})$ and dividing the first circle acting by $k$ we obtain a well defined toric action on $(L(k,l), \ker\alpha_{kl})$

$$(e^{i2\pi t_1},e^{i2\pi t_2})*(z_1,z_2)\mapsto(e^{i2\pi t_1/k}z_1,e^{i2\pi  l t_1/k}e^{i2\pi  t_2}z_2).$$

The corresponding  moment map is given by
$$H([z_1,z_2])=\pi(\frac{1}{k}(|z_1|^2+l |z_2|^2), |z_2|^2).$$
and the moment cone is spanned by the rays $(1,0)$ and $(l,k)$. See  Figure \ref{convex lens} on the left.
  
 \item The convex cone spanned by the rays $(1,0)$ and $(-1,0)$ corresponds to  $(S^1_{\theta}\times S^2_{h,z}, \ker (hd\theta+\frac{i}{4}(zd\bar{z}-\bar zdz)))$ with the toric action defined   by
 $$(e^{i2\pi t_1},e^{i2\pi t_2})*(e^{i2\pi \theta},z,h)\mapsto(e^{i2\pi (t_1+\theta)},e^{i2\pi   t_2}z,h).$$
 Namely,   the moment map is given by
 $$H(\theta, h,z)=(h,\pi|z|^2).$$
The corresponding moment cone is shown in  Figure \ref{convex lens} on the right. 
 
  \end{itemize} 
 
    \begin{figure}
\centering
\includegraphics[width=12cm]{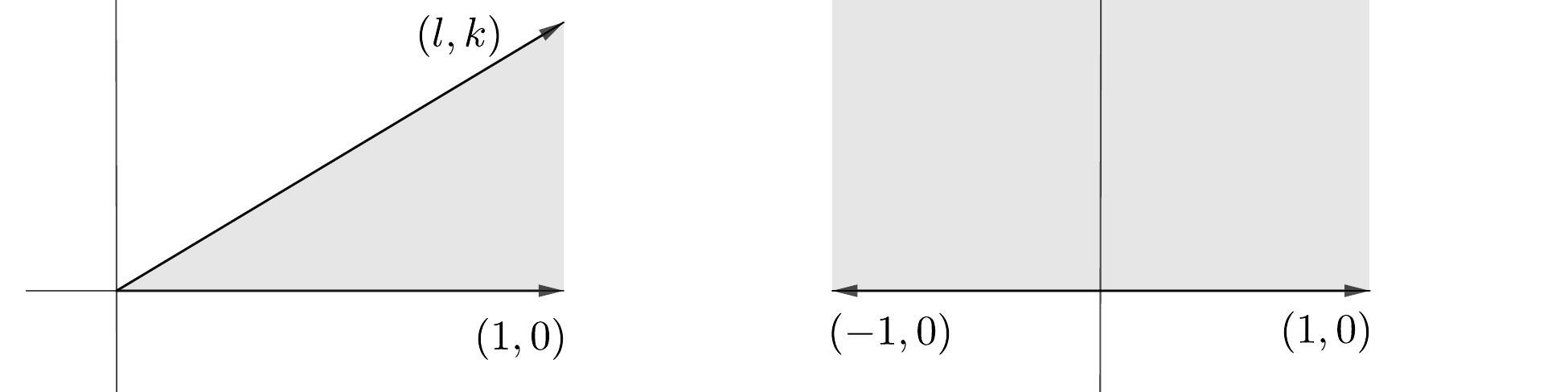}
\caption{Moment cone of a Lens space $L(k,l)$ with a standard tight contact structure, left $l>0$,  right $k=0.$ }
\label{convex lens}
\end{figure}

We are now ready to prove Theorem \ref{thm:tight}.

\begin{proof}[Proof of Theorem \ref{thm:tight}]
Suppose $(Y(t_1,t_2),\xi_{t_1,t_2})$ and $(Y(t_1',t_2'),\xi_{t_1',t_2'})$ are two tight contact toric 3-manifolds such that the underlying smooth 3-manifolds $Y(t_1,t_2)$ and $Y(t_1',t_2')$ are diffeomorphic. By Theorem~\ref{thm:overtwisted}, $t_2-t_1\leq \pi$. By Remark~\ref{rem:lens_space}, we assume $t_1=t_1'=0$, and $Y(t_1,t_2)=L(k,l)$ where $(\cos t_2, \sin t_2)$ is proportionate to $(l,k)$, $Y(t_1',t_2')=L(k',l')$ where $(\cos t'_2, \sin t'_2)$ is proportionate to $(l',k').$ Then $(Y(t_1,t_2),\xi_{t_1,t_2}) = (L(k,l),\ker(\alpha_{kl}))$ and $(Y(t_1',t_2'),\xi_{t_1',t_2'}) = (L(k',l'),\ker(\alpha_{k'l'}))$.

The classical theorem by Reidemeister says that 
 $L(k,l)$ is diffeomorphic to $L(k',l')$ if and only if
$  k=k', l'\equiv \pm l^{\pm1} \ (\textrm{mod})\  k .$ 
 Thus, $(k',l')$ must be one of
  $(k,-l),$ $(k, l+nk),$ $(k,-l+nk),$ $(k,r+nk)$, or $(k,-r+nk),$ for some $n\in\mathbb Z$ where $lr-ks=1,$ for some $s\in \mathbb Z.$
The moment cone of $(Y(t_1,t_2),\xi_{t_1,t_2})$ is the convex cone spanned by the rays $(1,0)$ and $(l,k)$, and we denote this by $C(k,l)$. Similarly, the moment cone of $(Y(t_1',t_2'),\xi_{t_1',t_2'})$ is the convex cone $C(k',l')$ spanned by the rays $(1,0)$ and $(l',k')$. We now relate these moment cones for the different possible cases of $(k',l')$.

The convex cone $C(k,l)$ is mapped by the transformations
$\begin{bmatrix}
1 & n \\
0 & 1 
\end{bmatrix}\in  SL(2, \mathbb Z) $ and $\begin{bmatrix}
r+nk & -s-nl \\
k & -l 
\end{bmatrix}\in -SL(2, \mathbb Z) $ to the convex cones $C(k,l+nk)$ and $C(k,r+nk)$, respectively, while 
the convex cone for $C(k,-l)$ is mapped by the transformations
$\begin{bmatrix}
1 & n \\
0 & 1 
\end{bmatrix}\in  SL(2, \mathbb Z) $ and $\begin{bmatrix}
-r+nk & -s+nl \\
k & l 
\end{bmatrix}\in -SL(2, \mathbb Z) $ to the convex cones $C(k,-l+nk)$ and $C(k,-r+nk).$ 
Therefore,  the corresponding contact structures  $\ker\alpha_{kl}$  are equivariantly contactomorphic. 

It is left to compare the contact structure $\ker\alpha_{kl}$ on $L(k,l)$ and the contact structure $\ker\alpha_{k,-l}$ on $L(k,-l).$ A diffeomorphism 
$$(z_1,z_2)\to(z_1, \bar z_2)$$
on $S^3$ induces a diffeomorphism from $L(k,l)$ to $L(k,-l)$ and the pull-back of this diffeomorphism maps  $\alpha_{k,-l}$ to  $\alpha_{k,l}.$

Therefore, all tight contact structures on a Lens space $L(k,l)$ that admit a toric action are contactomorphic.

\end{proof}

%
%
%
%

\section{Overtwisted contact toric structures}\label{sec:overtwisted}

We now focus on contact toric 3-manifolds with an overtwisted contact strucuture. Note that 
the toric action has to be non-free, because the contact toric 3-manifold with a free toric action is equivariantly contactomorphic to 
$(T^3,  \xi_n= \ker(\cos ntd\theta_1+\sin nt d\theta_2) ),$  for some $n  \in  \mathbb N,$ and  these contact structures are tight. 
Further, according to Theorem  \ref{thm:tight-ot},  the  moment cone corresponding to an overtwisted contact toric structure  spans an angle $ >  \pi. $ In contrast to the tight contact toric structures, in general, overtwisted contact toric structures are not classified by the corresponding moment cones. Namely, 
if  $t_2-t_1\geq2\pi$ then the moment cone  of $(Y_{t_1,t_2},\xi_{t_1,t_2})$ is  
$\mathbb R^2$. Thus,  non-diffeomorphic manifolds can have the same moment cone. Furthermore, according to Lerman (see \cite{Lerman1}), the overtwisted contact structures of $(Y_{t_1,t_2},\xi_{t_1,t_2})$ and 
$(Y_{t_1,t_2+2n\pi},\xi_{t_1,t_2+2n\pi})$
 are homotopic, for all $n\in\mathbb N.$ 
This rotation of the second ray can be also explained in terms of  the full Lutz twist.
      We first recall the relevant definitions. For more details we refer to  \cite{Geiges}.
      
Let $L$ be a transversal knot of the contact structure $(Y, \xi)$. Then, there is a  small neighborhood of $L$ contactomorphic to $(S_{\theta}^1\times D^2_{t,r<\varepsilon},\xi= \ker(d\theta+r^2dt)$, where $L$ is identified with $S^1\times\{0\}$.  The following $T^2$ action on this neighborhood

\begin{equation}\label{eq:action}
(e^{i2\pi t_1},e^{i2\pi t_2})*(e^{i2\pi\theta},re^{i2\pi t})\mapsto(e^{i2\pi (t_1+\theta)},re^{i2\pi  (t_2+t)})
\end{equation}
preserves $\xi$ and, thus, it is a toric action. The corresponding moment map is given by
$$H(\theta, t,r)=(1,r^2)$$
and the moment map image is shown on the left in Figure \ref{fig:lutz}.

  \begin{definition}  
Replace the contact structure $\xi$ on $S^1\times D^2_{t,r<\varepsilon}$ with a contact structure
  $\xi' =\ker(h_1(r)d\theta+h_2(r)dt)$ where  $h_1 (r), h_2 (r): [0, \varepsilon) \to \mathbb R$ are smooth functions that satisfy the contact condition  $h_1(r)h_2' (r)- h_2 (r)h_1'(r)  \neq0$,  for all $ 0\leq r< \varepsilon$,    and $\xi'$ coincides with $\xi$ outside $S^1 \times D^2_{t,r<\varepsilon}$.
This procedure  is called:
 \begin{itemize}
 \item[a)] a  half-Lutz twist if 

 $$h_1(r)=-1, h_2(r)=-r^2,\hskip2mm\textrm{for} \hskip2mm r\in [0,\varepsilon/3],$$

  $$h_1(r)=1, h_2(r)=r^2, \hskip2mm\textrm{for} \hskip2mm r\in  [2\varepsilon/3,\varepsilon);$$

and the winding number of the curve $(h_1(r),h_2(r)),$ $r\in [0,\varepsilon]$ is $0$.

\item[b)] a full-Lutz twist if

$$h_1(r)=1, h_2(r)=r^2 \hskip2mm\textrm{for} \hskip2mm r\in [0,\varepsilon/3]\cup [2\varepsilon/3, \varepsilon)$$
and the winding number of the curve $(h_1(r),h_2(r))$, $r\in[0,\epsilon]$, is 1. 
\end{itemize}

\vskip2mm

Note that the $T^2$ action (\ref{eq:action}) preserves the contact structure $\xi' $ and, therefore, it is a toric action. The corresponding moment map is given by
$$H(\theta, t,r)=(h_1(r),h_2(r))$$
      and the moment map images are shown on the right in Figure \ref{fig:lutz}.
      
           \end{definition}

         \begin{figure}
\centering
\includegraphics[width=11cm]{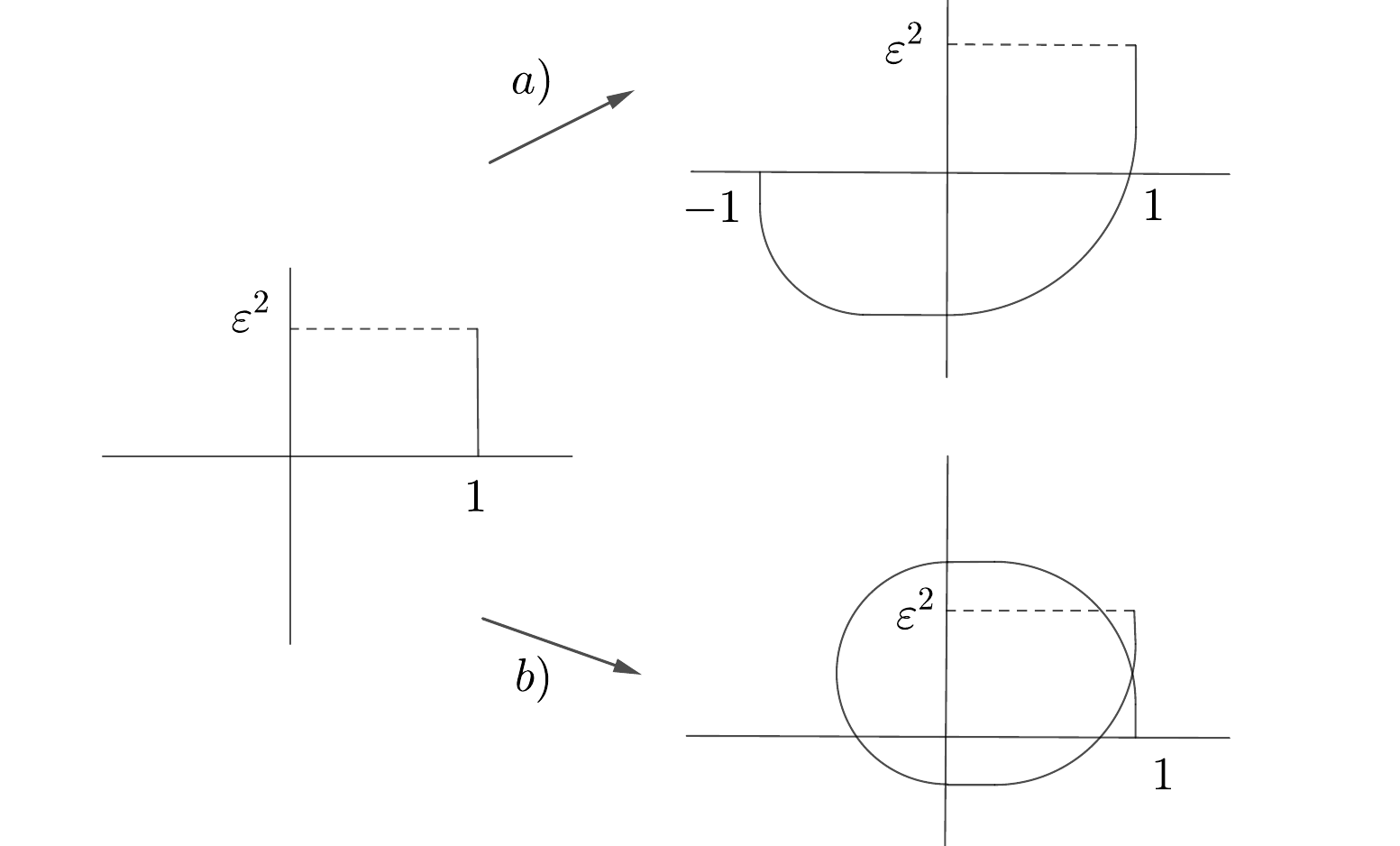}
\caption{a) A half-Lutz twist,  b) a full-Lutz twist }
\label{fig:lutz}
\end{figure}      

We now make use of the Lutz twists equipped with a toric action.
       Note that for a contact toric manifold $(Y_{t_1,t_2},\xi_{t_1,t_2})$ the circle orbits $L_1$ and $L_2$ corresponding to the numbers $t_1$ and $t_2$ are transversal to the contact structure $\xi,$
     since the Hamiltonian function $H$ does not vanish along them. Therefore, we may perform Lutz twists along either of them.

      \begin{proposition}\label{prop: Lutz}  Let $L$ be a circle orbit corresponding to $t_1$ or $t_2$
      in $(Y_{t_1,t_2},\xi_{t_1,t_2})$.      \begin{itemize} 
\item[a)] The half-Lutz twist of $(Y_{t_1,t_2},\xi_{t_1,t_2})$  along $L$ is equivariantly contactomorphic to 
$(Y_{t_1,t_2+\pi},\xi_{t_1,t_2+\pi}).$

  \item[ b)] The full-Lutz twist of  $(Y_{t_1,t_2},\xi_{t_1,t_2})$ along $L$ is equivariantly contactomorphic to  $(Y_{t_1,t_2+2\pi},\xi_{t_1,t_2+2\pi}).$
      \end{itemize}      
      \end{proposition}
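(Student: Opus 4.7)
The plan is to perform the Lutz twist inside a standard toric tubular neighborhood of $L$, read off the resulting moment cone from the definition, and then invoke Lerman's equivariant classification (Theorem~\ref{theorem Lerman class}) to identify the new contact toric manifold with the one claimed.

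Consider first the case $L=L_2$. By an $SL(2,\mathbb Z)$ change of lattice basis for the acting $T^2$ sending the collapsing slope $(n_2,-m_2)$ at $L_2$ to $(0,1)$, together with the equivariant neighborhood theorem for transverse circle orbits of a contact toric manifold, a suitably small invariant tubular neighborhood $U$ of $L_2$ is equivariantly contactomorphic to the standard model $(S^1_\theta\times D^2_{t,r<\varepsilon},\ker(d\theta+r^2\,dt))$ equipped with the toric action~(\ref{eq:action}). Under this normalization the ray at angle $t_2$ is sent to the $(1,0)$ ray in the moment plane, while the rest of the old moment cone lies in a thin wedge abutting it from one fixed side. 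Now perform the Lutz twist inside $U$, replacing $d\theta+r^2\,dt$ by $h_1(r)\,d\theta+h_2(r)\,dt$; by the discussion in the excerpt, the action~(\ref{eq:action}) still preserves the twisted contact form, so the modified manifold is again contact toric with the same underlying torus action, and the contact structure outside $U$ is unchanged.

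To compute the new moment cone, note that on $U$ the new $\alpha'$-moment image is the trace of the curve $(h_1(r),h_2(r))$ for $r\in[0,\varepsilon)$, and coning this over to the symplectization contributes precisely the union of rays at the angles swept by this curve. By the winding-number data in the definition of the Lutz twist, this sweep extends the old thin wedge past the $(1,0)$ ray on the opposite side from the old cone by an angular arc of exactly $\pi$ for the half-Lutz twist and $2\pi$ for the full-Lutz twist. Undoing the $SL(2,\mathbb Z)$ normalization, the new global moment cone is the wedge from angle $t_1$ to angle $t_2+\pi$ (respectively $t_2+2\pi$), with the collapsing slope at the new extremal ray exactly as demanded by Definition~\ref{def:Yt1t2}. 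Lerman's classification then identifies the twisted manifold equivariantly with $(Y(t_1,t_2+\pi),\xi_{t_1,t_2+\pi})$ for part~(a) and with $(Y(t_1,t_2+2\pi),\xi_{t_1,t_2+2\pi})$ for part~(b). For $L=L_1$, the symmetric argument produces the cone $[t_1-\pi,t_2]$ (respectively $[t_1-2\pi,t_2]$), which is $SL(2,\mathbb Z)$-equivalent via $-I$ to $[t_1,t_2+\pi]$ (respectively $[t_1,t_2+2\pi]$), giving the same conclusion.

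The main obstacle is fixing the orientation of the identification of $U$ with the standard transverse-knot model so that the angular sector added by the Lutz twist lies on the outer side of the ray at $t_2$ rather than overlapping the old cone; once this normalization is pinned down consistently, the rest of the argument is essentially a direct reading off of the Lutz-twist definition combined with Theorem~\ref{theorem Lerman class}.
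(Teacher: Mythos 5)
Your proposal is correct and follows essentially the same route as the paper: identify an invariant neighborhood of the chosen circle orbit with the standard transverse-knot model $(S^1\times D^2,\ker(d\theta+r^2dt))$ equipped with the action~(\ref{eq:action}), perform the Lutz twist equivariantly, read off the extended moment cone from $(h_1(r),h_2(r))$, and apply a $\pi$-rotation ($-I\in SL(2,\mathbb Z)$) when the twist is done at the other end. The only differences are cosmetic: the paper carries out the equivariant neighborhood identification explicitly (via the map $(\theta,t,r)\mapsto(\theta,t,\sqrt{\tan(t_2 r)})$ applied to Lerman's local model) rather than citing a neighborhood theorem, and for part (b) it also offers an alternative soft argument via Eliashberg's classification of overtwisted structures.
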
          

      \begin{proof} a) Without loss of generality assume $t_1=0.$
      We perform a half-Lutz twist along the orbit $L$ that corresponds to $t_1=0$ in the following way. 
      
      Let us first show that there is a small neighborhood of $L$ that is equivariantly contactomorphic to 
      $(S_{\theta}^1\times D^2_{t,r<\varepsilon}, \ker(d\theta+r^2dt))$   with the toric action precisely given by
   (\ref{eq:action}).

   According to Lerman, there is a small neighborhood $u$ of $L$ that is equivariantly contactomorphic to
$$(T^2\times [0,\varepsilon'), \ker(\cos (t_2s)d\theta_1+\sin (t_2s) d\theta_2) )/_{\sim}, $$

where  $T^2 \times \{ 0\}$ is collapsed along the circle of slope $(0,1)$ and this quotient corresponds to $L.$ Therefore, this neighborhood of $L$ is diffeomorphic to $S^1 \times D^2.$

   Denote by $\alpha_0'$ the contact form on $T^2 \times [0, \varepsilon')/ _{\sim} $ where
   $\pi^*\alpha_0'=\cos (t_2s)d\theta_1+\sin (t_2s) d\theta_2$
 and
    $\pi: T^2 \times  [0, \varepsilon')   \to T^2 \times [0, \varepsilon')/_{  \sim}  $ is the natural projection. Next, if $\varphi:  S_{\theta}^1\times D^2_{t,r<\varepsilon'}  \to  T^2 \times [0, \varepsilon')/_{ \sim}$ is a diffeomorphism given by
    $\varphi(\theta,t,r)=(\theta_1=\theta, \theta_2=t, s=r)$
     we  denote  
             $ \alpha_0''= \varphi^{*}  \alpha_0'.$
                  Then
             $ \alpha_0''= \cos (t_2r)d\theta+\sin (t_2r) dt.$
             In the same contact structure, we consider the contact form 
               $$ \alpha_0= d\theta+\tan (t_2r) dt.$$             Denote
             $$\alpha_1=d\theta+r^2dt.$$
             Then, the map $\Phi:S^1\times D^2_{r<\varepsilon'}\to S^1\times D^2_{r<\varepsilon}$, where $\varepsilon=\sqrt{\tan(t_2 \varepsilon')}$, defined by $\Phi(\theta,t,r)=(\theta, t, \sqrt{\tan(t_2 r)})$
  is a diffeomorphism that satisfies $\Phi^*\alpha_1=\alpha_0$ . As $ \Phi$ is also invariant under the toric action (\ref{eq:action}) we conclude that              
             $\alpha_0$ and $\alpha_1$ are equivariantly contactomorphic. Therefore, 
             $(T^2 \times [0, \varepsilon')/_{  \sim} ,\ker\alpha_0' )$ is equivariantly contactomorphic to  $(S_{\theta}^1\times D^2_{t,r<\varepsilon}, \ker(d\theta+r^2dt))$ with respect to the toric action (\ref{eq:action}).

   We now perform the half-Lutz twist as described above. The   action (\ref{eq:action}) on the Lutz twisted neighborhood
       $(S_{\theta}^1\times D^2_{t,r<\varepsilon},  \ker(h_1(r)d\theta+h_2(r)dt))$ preserves the contact structure, and, thus, it is a toric action. The moment map is given by $H(\theta, t,r)=(h_1(r),h_2(r)).$ Therefore, the new contact structure is also a toric contact structure. On the complement of $S_{\theta}^1\times D^2_{t,r<\varepsilon}$ the contact toric form is not changed. Thus, it follows that the new toric contact manifold is $(Y(t_1-\pi,t_2,\xi_{t_1-\pi,t_2}).$ By performing the $\pi$-rotation, we conclude that the new toric contact manifold is equivariantly contactomorphic to $(Y(t_1,t_2+\pi),\xi_{t_1,t_2+\pi})$.
       
      One obtains an analogous result by performing the half-Lutz twist along the other circle orbit.

      b) The proof can be derived similarly as in part a). However, we can also derive it in the following way. By performing the full Lutz twist on the tight contact structure  $\xi$ we obtain an overtwisted contact structure that is in the same homotopy class as 2-plane fields with $\xi$, see \cite[Lemma 3.17]{Geiges}. On the other hand,  by rotating the second ray by $2\pi$ we obtain an overtwisted contact structure also in the same homotopy class as 2-plane fields, see \cite[Theorem 3.2]{Lerman1}. Finally, according to Eliashberg (\cite{Eliashberg}), in every homotopy class of 2-plane fields on a 3-manifold there is unique, up to isotopy,  overtwisted contact structure and, therefore, these two overtwisted contact structures are isotopic.

      \end{proof}

 \begin{proof}[Proof of Theorem \ref{thm:overtwisted}.]

  We divide the proof into several steps.

  Fix an underlying diffeomorphism type $L(k,l)$ of a contact toric manifold.
   
    \textbf{Step 1.} \emph{  Collecting all the cones corresponding to overtwisted contact structures.}

    We consider all possible contact toric structures on the manifold $L(k,l)$. Note that each of these has the form $(Y({t_1,t_2}),\xi_{t_1,t_2})$. Without loss of generality, we will assume $t_1=0$.
   
   $\bullet$ Consider all $(Y({t_1,t_2}),\xi_{t_1,t_2})$ where $Y({t_1,t_2})\cong L(k,l)$ and $2\pi n< t_2-t_1 \leq 2\pi n+\pi$ for some $n\in \mathbb{N}$. Let $t_2' = t_2-2\pi n$, $t_1'=t_1$. Then $(Y({t_1',t_2'}),\xi_{t_1',t_2'})$ has a convex moment cone, $Y({t_1',t_2'})$ is diffeomorphic to $Y({t_1,t_2})$, and $\xi_{t_1',t_2'}$ is a tight contact structure. By Theorem~\ref{thm:tight}, any two tight contact toric structures on $L(k,l)$ are contactomorphic. As explained above, Lerman showed that $\xi_{t_1,t_2}$ and $\xi_{t_1',t_2'}$ are homotopic as 2-plane fields. By Eliashberg, any two overtwisted contact structures which are homotopic are contact isotopic, and in particular contactomorphic. Thus all contact toric manifolds $(Y({t_1,t_2}),\xi_{t_1,t_2})$ such that $Y({t_1,t_2})$ is diffeomorphic to $L(k,l)$ and $2\pi n< t_2-t_1 \leq 2\pi n+\pi$ for some $n\in \mathbb N$ are contactomorphic. Additionally, such overtwisted contact toric structures exist on each $L(k,l)$, since we can start with the a representative of the tight contact toric structure on $L(k,l)$ and add $2\pi$ to $t_2$. We will denote this overtwisted contact structure on $L(k,l)$ by $\xi_1$.
   

  $\bullet$   Next, consider $(Y({t_1,t_2}),\xi_{t_1,t_2})$ with $Y({t_1,t_2})\cong L(k,l)$ and $2\pi n+\pi < t_2-t_1 <2\pi(n+1)$ for $n\in \mathbb Z_{\geq 0}$. Set $t_2'=t_2-2\pi n$ and $t_1'=t_1$. Then $Y({t_1',t_2'})\cong Y({t_1,t_2})\cong L(k,l)$, and $\xi_{t_1,t_2}$ and $\xi_{t_1',t_2'}$ are homotopic and are both overtwisted. Thus $(Y({t_1,t_2}),\xi_{t_1,t_2})$ is contactomorphic to $(Y({t_1',t_2'}),\xi_{t_1',t_2'})$. Now let $t_2''=t_2'-\pi$ and $t_1''=t_1'$. Then $(Y({t_1'',t_2''}),\xi_{t_1'',t_2''})$ is the unique tight contact toric structure $(L(k,l),\ker(\alpha_{kl}))$ since $0<t_2''-t_1''< \pi$. By Proposition~\ref{prop: Lutz}, $(Y({t_1',t_2'}),\xi_{t_1',t_2'})$ is obtained from $(Y({t_1'',t_2''}),\xi_{t_1'',t_2''})$ by a half-Lutz twist along the transversal circle orbit $L$. Thus, any overtwisted contact structure on $L(k,l)$ with $2\pi n+\pi < t_2-t_1 <2\pi(n+1)$ is contactomorphic to the one obtained from $(L(k,l),\ker(\alpha_{kl}))$ by a half-Lutz twist along the transversal circle orbit of the toric action. We denote this contact structure on $L(k,l)$ by $\xi_2$.
 

Next, we will compare these two overtwisted contact structures $\xi_1$ and $\xi_2$ on $L(k,l)$ and show that they are not contactomorphic.

 \textbf{Step 2.} \emph{ Obstruction class $d_2(\xi_1,\xi_2).$ }
 
It is enough to show that the overtwisted contact structures $\xi_1$ and $\xi_2$ belong to different homotopy classes as oriented plane fields on $L(k,l)$. This can  be detected by  certain obstruction classes.

We briefly define the class
$$d_2(\xi_1, \xi_2) \in H^2(Y;\pi_2(S^2))= H^2(Y;\mathbb Z)$$
on a 3-manifold $Y$
that measures if  $\xi_1$ and $\xi_2$ are homotopic as 2-plane fields over the 2–skeleton of $Y$. To see this obstruction, recall that the homotopy classes of oriented plane fields on a 3-manifold are in 1-1 correspondence with the
homotopy classes of maps $f: Y\to S^2$. In turn, the homotopy classes of maps $f:Y  \to S^2$ are in 1-1 correspondence with  cobordism classes of framed (and oriented) links in $Y$ (these are called the corresponding Pontryagin manifolds, for the details we refer to \cite[Section 7]{Milnor}).
Then, associate to $\xi_1 $ and $\xi_2$ corresponding classes of links $[L_1]$ and $[L_2]$ and define
$$d_2(\xi_1,\xi_2 )=PD([L_1])-PD([L_2]),$$
where $PD$ denotes the Poincar\'e dual.
Then, $\xi_1$ and $\xi_2$ are homotopic as 2-plane fields over the 2–skeleton of $Y$ if and only if $d_2(\xi_1,\xi_2 )=0$  (\cite[Lemma 4.2.5.]{Geiges}).
 The following proposition of Geiges explains the relative $d_2$ obstruction between two contact structures related by a half-Lutz twist.

\begin{proposition} \cite[Proposition 4.3.3.]{Geiges}
Let $L$ be a transversal knot of the contact structure $(Y, \xi_1)$. If the contact structure $\xi_2$ is obtained from $\xi_1 $ by performing a half-Lutz twist along $L$ then
$$d_2(\xi_1, \xi_2) = PD([L]),$$
where $PD$ is the Poincar\'e dual of the first homology class represented by $L.$

\end{proposition}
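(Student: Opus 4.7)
The plan is to compute $d_2(\xi_1,\xi_2)$ directly from the Pontryagin--Thom picture recalled in the excerpt, exploiting the fact that $\xi_1$ and $\xi_2$ coincide outside a tubular neighborhood of $L$. Fix once and for all a trivialization $TY\cong Y\times\mathbb R^3$, so that each oriented $2$-plane field $\xi_i$ determines its positive unit normal as a map $f_i:Y\to S^2$. By the discussion preceding the proposition, $d_2(\xi_1,\xi_2)\in H^2(Y;\mathbb Z)$ is the primary obstruction to homotoping $f_1$ to $f_2$ over the $2$-skeleton---a relative obstruction supported where $f_1\ne f_2$.

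Since the half-Lutz twist is performed inside a solid torus neighborhood $\nu(L)\cong S^1_\theta\times D^2_{t,r<\varepsilon}$, one has $f_1=f_2$ on $Y\setminus\nu(L)$. Thus $d_2(\xi_1,\xi_2)$ lies in the image of $H^2(\nu(L),\partial\nu(L);\mathbb Z)\to H^2(Y;\mathbb Z)$, which under Poincar\'e--Lefschetz duality corresponds to the image of $H_1(\nu(L);\mathbb Z)=\mathbb Z\cdot[L]$ in $H_1(Y;\mathbb Z)$. Consequently the class is necessarily a multiple of $PD([L])$, and the remaining task is to pin down the multiplicity.

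To compute this multiplicity I would work in the explicit model of the twist. With the trivialization $(\partial_\theta,\partial_t,\partial_r)$ of $T\nu(L)$, the oriented unit normal to $\ker(h_1(r)d\theta+h_2(r)dt)$ is proportional to $h_1(r)\partial_\theta+h_2(r)\partial_t$, so $f_i|_{\nu(L)}$ factors through $(\theta,t,r)\mapsto r\mapsto (h_1(r),h_2(r))/\|\cdot\|\in S^2$ and lands in the equatorial circle. For $\xi_1$ this arc stays in a small neighborhood of $\partial_\theta$, whereas for the half-Lutz $\xi_2$ it starts at $-\partial_\theta$ at $r=0$ and returns to $\partial_\theta$ near $r=\varepsilon$ while tracing a half of the equator with winding number $0$ about the origin. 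Picking a regular value $p\in S^2$ lying on the half-equator swept out by $\xi_2$ but not hit by $\xi_1$, the signed preimage count $|f_2^{-1}(p)\cap\nu(L)|-|f_1^{-1}(p)\cap\nu(L)|$ reduces meridian-by-meridian to a signed count of crossings of the half-equatorial arc across the ray through $p$; this is exactly $\pm1$. The resulting $1$-cycle in $\nu(L)$ is a parallel copy of $L$, carrying the framing induced by $df_2$.

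The last step is sign-bookkeeping, which is where I expect the real effort to lie. The orientation conventions for the positive unit normal, the induced framing on the Pontryagin submanifold, and Poincar\'e--Lefschetz duality must all be tracked simultaneously; once they are pinned down consistently with $L$ carrying its transversal orientation from $\xi_1$ (so that $\alpha_1|_L>0$ along the tangent), one arrives at the stated equality $d_2(\xi_1,\xi_2)=PD([L])$. A minor technicality---smoothing the profile $(h_1,h_2)$ at $r=0$ so that the chosen $p$ is genuinely a regular value of $f_2$---is handled by a standard perturbation that does not affect the resulting homology class.
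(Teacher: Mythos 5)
First, a point of comparison: the paper does not prove this proposition at all --- it is quoted verbatim from Geiges \cite[Proposition 4.3.3]{Geiges} --- so there is no internal proof to measure you against. Your overall strategy (localize the obstruction to the Lutz tube via the Pontryagin--Thom correspondence, observe that the difference class must be a multiple of $PD([L])$, then count preimages of a regular value to pin down the multiplicity) is exactly the strategy of Geiges's proof. However, there is a genuine error in the central computation. The frame $(\partial_\theta,\partial_t,\partial_r)$ is \emph{not} a trivialization of $T\nu(L)$: the polar frame on the $D^2$-factor degenerates at $r=0$, i.e.\ precisely along $L$. As a consequence, your assertion that $f_i|_{\nu(L)}$ factors through $r$ and lands in the equatorial circle is false, and if it were true the argument could not proceed: a map whose image lies in a circle in $S^2$ has no regular values in its image, and the preimage of an equatorial point would be the whole torus $\{r=r_0\}$ rather than a framed link, so it would not define a class in $H_1$. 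The ``meridian-by-meridian signed crossing count'' does not repair this --- it computes a winding number on each disk fiber, not the Pontryagin $1$-manifold that the obstruction theory requires.

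The computation goes through once you use the Cartesian frame $(\partial_\theta,\partial_x,\partial_y)$ on $S^1\times D^2$, which does extend over $r=0$ and agrees with the ambient trivialization up to homotopy. Writing $dt=(x\,dy-y\,dx)/r^2$, the Gauss map is
$$f(\theta,x,y)=\frac{\bigl(h_1(r),\,-h_2(r)\sin t/r,\,h_2(r)\cos t/r\bigr)}{\bigl\|\bigl(h_1(r),\,-h_2(r)\sin t/r,\,h_2(r)\cos t/r\bigr)\bigr\|},$$
whose image is genuinely two-dimensional (a family of latitude circles at heights governed by $h_1$). An equatorial point $p$, say in the $\partial_y$-direction, is then a regular value: its $f_1$-preimage in the tube is empty because $h_1\equiv 1$ for the untwisted model, while its $f_2$-preimage is a longitude $\{r=r_0,\ t=t_0\}\times S^1_\theta$ at the radius where $h_1$ changes sign, with net count one for a half-Lutz profile. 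Since $f_1=f_2$ outside the tube, the difference of Pontryagin links is a single parallel copy of $L$, giving $d_2(\xi_1,\xi_2)=PD([L])$ after the sign bookkeeping you describe. So the skeleton of your argument is correct, but the model computation inside the tube must be redone in a nonsingular frame; the issue is not a ``minor technicality at $r=0$'' but the choice of frame itself.
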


Note that,  according to Proposition \ref{prop: Lutz}, the contact structure  $\xi_1$ is obtained from $\xi_2$ by performing a half-Lutz twist along a transversal orbit $L.$ (Moreover, $\xi_2$ is also obtained from $\xi_1$ by performing a half-Lutz twist.) This orbit is precisely the circle orbit obtained by collapsing 
 $T^2\times \{0\}$ along the circle of slope $(0,1)$ (as we suppose $t_1=0$ for both contact structures). 

Let us show that $d_2(\xi_1, \xi_2)$ is non-vanishing in the case of $ S^1 \times S^2$ and all lens spaces $L(k,l)$ different from $S^3.$
 
  If $Y=S^1 \times S^2 $ the chosen transversal orbit $L$ is precisely $S^1 \times  \{N \},$ where $N$ is the north pole of $S^2.$ It corresponds to the generator in 
  $H_1(S^1 \times S^2, \mathbb Z) \cong \mathbb Z$ and, therefore its Poincar\'e dual is non-vanishing.
 
  If $Y=L(k,l)$, we start  from the observation that $L(k,l)$ is obtained from $T^2  \times [0,1]$ by collapsing $T^2\times \{0\}$ and $T^2\times \{1\}$ along the circles of slopes $(0,1)$ and $(k,-l),$
  respectively. That is, in the definition of the first homology of $L(k,l)$ we have two generators $e_1\cong (1,0)$ and $e_2 \cong (0,1)$, coming from the total space $T^2  \times [0,1]$,  and the relations 
  $e_2=0$ and $ke_1-le_2=0.$ Thus, $ke_1=0$, that is, $e_1$ quotients to the generator of the cyclic group $\mathbb Z_k $, and, in particular,  $H_1(L(k,l),\mathbb Z )=\mathbb Z_k. $ However, since the transversal orbit $L$ is obtained by collapsing  $T^2\times \{0\}$ along the circle of slope $(0,1)$, it corresponds to the generator $e_1$ in the first homology of $L(k,l).$ In particular, it is non-vanishing, and so is its Poincar\'e dual class.

If $Y=S^3$, because
$H^2(S^3, \mathbb Z)=0$, we are not able to use the same argument. In this case, we use another obstruction to distinguish $\xi_1$ and $\xi_2$.

 \textbf{Step 3.} \emph{Obstruction class $ d_3(\xi_1,\xi_2)$ on $S^3.$}
 
Since $d_2(\xi_1, \xi_2) = 0$ on $S^3$, then  we employ an obstruction class
$$d_3(\xi_1, \xi_2) \in H^3(Y;\pi_3(S^2)) \cong H^3(Y;\mathbb Z)=\mathbb Z $$
to check if $\xi_1$ and $\xi_2 $ are homotopic as plane fields over all of $S^3$. We skip the formal definition (see  \cite[Section 4.2.3.]{Geiges}) and, in order to avoid specific trivialisations and induced framings, we make use of the following characterisation by Gompf.

 \begin{definition} \cite[Definition 4.2.]{Gompf} Let $ \xi$ be an oriented 2-plane field on a closed, oriented 3-manifold $Y$ (not necessarily connected) such that the first Chern class of $ \xi$ is a torsion class. Suppose that $(Y,  \xi)$ is the almost-complex boundary of a compact, almost-complex 4-manifold $(X,J)$, that is $ \partial X = Y$ (as an oriented manifold) and $ \xi$ is the field of complex lines in $TY \subset TX|_Y $. Then, define 
 $$ \theta ( \xi ) = (PD (c_1(X)))^2 -2 \chi(X)- 3 \sigma(X)  \in \mathbb Q$$

 and
 $$d_3(\xi_1,\xi_2)=\frac{1}{4}\theta(\xi_1,\xi_2),$$ 
 where $c_1(X)$ is the first Chern class of $(TX, J)$, $ \chi(X)$ is the Euler characteristic  of $X$ and $ \sigma$ is the signature of  $X$.

       \end{definition}      
       
       According to       \cite[Theorem 4.5.]{Gompf} the number $      \theta$ is an invariant of $(Y,  \xi)$ that does not depend on the choice of $(X,J).$ Moreover,  according to
         \cite[Theorem 4.16.]{Gompf} it follows that 
          the contact structures    $\xi_1$ and $   \xi_2$ are not  homotopic as 2-plane fields  if     $ \theta ( \xi_1 )\neq\theta ( \xi_2 ).$
         
                        \vskip2mm

    In order to employ $  \theta$, let us briefly explain that         $(S^3, \xi_1 )$ and $(S^3, \xi_1 )$ can be realised as  concave contact boundaries of the linear plumbings $(0,0,0,0,0,0)$ and $(0,0,0,0)$, respectively. Note first that the contact (toric) manifolds 
    $(S^3, \xi_1 )$ and $(S^3, \xi_2)$ are defined by the numbers $t_1=0, t_2= 5\pi/2$ and 
        $t_1=0, t_2= 3\pi/2,$ respectively. 
          As described in 
          Section \ref{sec:mnrstw}, we decompose the linear plumbing
          $(0,0,0,0,0,0)$          into the sequence of five  equal linear plumbings $(0,0)$ and we glue them via map $\begin{bmatrix}
 0 & -1 \\
1 &  0
\end{bmatrix}.$ Since the concave contact boundary of the linear plumbing $(0,0)$ is defined by the numbers $t_1=0$ and $t_2=\pi/2$, after gluing five copies of it, we obtain the linear plumbing $(0,0,0,0,0,0)$ whose concave contact boundary is defined by the numbers 
$t_1=0$ and $t_2=5\pi/2$. 
 Similarly,  $(S^3, \xi_2 )$     can be realised as a concave contact boundary of   the linear plumbing $(0,0,0,0).$ 
For more general description of how to associate a linear plumbing to any given  non-free contact toric 3-manifold
   see the proof of Theorem  \ref{thm:toric_is_plumbing}.

      Denote by $X_1$ and $X_2$  the symplectic manifolds $(0,0,0,0,0,0)$
      and $(0,0,0,0).$ Then, 
        $X_1$ and $X_2$ admit  almost complex structures
         $J_1$ and $J_2$ compatible with the corresponding symplectic forms $\omega_1$ and $\omega_2$. These almost complex structures can be chosen near the boundary in such a way that $J_kR_k=Y_k$ and $J_k \xi_k= \xi_k$, for $k=1,2,$ where $Y_k$ is a Liouville vector field corresponding to $\omega_k$ and $R_k$ is the Reeb vector field of the contact form $\iota_{Y_k}\omega_k.$ Therefore, the contact structures 
         $\xi_1$ and $\xi_2$ are invariant under the corresponding almost complex structures $J_1$ and $J_2$
         and we are able to make use of the invariant  $\theta ( \xi )$. Let us compute all the invariants involved in the definition of $\theta.$
         
      \begin{itemize}
            \item Signature of the plumbing $X$ is equal to
            $$      \sigma(X)= b_2^+-b_2^-, $$
            where $b_2^+$ and $b_2^-$ denote the number of positive and negative eigenvalues of the corresponding intersection form $Q$ of the given plumbing (see \cite[Section 1.2.]{GS} for the general definition of the intersection form on a 4-manifold).
            
            In our case of  linear plumbings over spheres, the intersection forms  for $X_1$ and $X_2$ are respectively given by
            $$Q_1=\left[\begin{array}{cccccc} 
0 & 1 & 0 & 0 & 0 & 0 \\
1 & 0 & 1 & 0 & 0& 0 \\
0 & 1 & 0 & 1 & 0 & 0 \\
0 & 0 & 1 & 0 &     1  & 0  \\
 0 &0    &0    &        1  & 0& 1\\
 0 &  0& 0  &        0      & 1 & 0
\end{array}  \right] , Q_2=\left[\begin{array}{cccc} 
0 & 1 & 0 & 0  \\
1 & 0 & 1 & 0  \\
0 & 1 & 0 & 1  \\
0 & 0 & 1 & 0      
\end{array}  \right] .$$
The eigenvalues are the solutions $\lambda$'s of equations $\det(Q_k-\lambda I)=0$, $k=1,2,$ that is,   $\lambda^6-5 \lambda^4+6 \lambda^2-1=0$ and $\lambda^4-3\lambda^2+1=0$. We solve these equations by plugging $\lambda^2=\Lambda>0$ and we obtain that the number of positive and negative eigenvalues is equal for both matrices. Thus, 
$$   \sigma(X_1)=    \sigma(X_2)=0.$$
            
                  \item The Euler characteristic of the 4-manifold $X$ is equal to
                  $$\chi(X) =b_0-b_1+b_2-b_3+b_4,$$
                  where $b_j$ is the $j$th Betti number, i.e. the rank of $H_j(X,\mathbb Z).$
                  
                  Since we deal only with the plumbings over spheres, these can be obtained by attaching $k$ 2-cells along one 0-cell, where $k$ denotes the number of spheres in the base. See 
                   \cite[Example 4.6.2]{GS}.                  
                  Therefore,  $b_0=1,$ $b_1=b_3=b_4=0$ and $b_2=k$ and 
                  $$\chi(X_1) =7, \chi(X_2) =5.$$                  
                                    
                  \item The Poincar\'e dual of the first Chern class $PD(c_1(X))$ will be computed using the adjunction formula (\cite[Theorem 1.3]{McD}), which says
                  that for a symplectic 4-manifold $X$ and a symplectic submanifold $C$ (one can choose a compatible almost complex structure $J$ such that $J$ preserves the tangent bundle of $C$) the following equality holds
                  $$   \langle c_1(X),  [C]  \rangle=2-2g(C)+[C]^2, $$     
                  where $g(C)$ is the genus and $[C]^2$ the self-intersection number of $C.$

       Therefore, denoting by $C_j$ the spheres in the base of the plumbing $X_i$, we obtain      
                    $$   \langle c_1(X_i),  [C_j]  \rangle=2.$$      
                    
           We further present the computation in the case of the plumbing $X_1.$ Since $PD(c_1(X_1))$ is an element in the second homology of $X_1$, whose generators are the classes represented by the spheres in the base of the plumbing, set
             $$PD(c_1(X_1))= \sum_{j=1}^6 a_j[C_j],$$
             where $[C_j]=[(0, ..., 1, ..., 0)]  \in H_2(X_1, \mathbb Z)$.
             Let us find $a_j,$ for all $j=1,    \ldots, 6$. By pairing  the classes $[C_j]$, $j=1,    \ldots, 6$ we obtain
               $$ [C_i] Q_1 [C_j]=1,$$
               if $i=2,j=1$, or $i=1,3, j=2$, or $i=2,4,j=3$, or $i=3,5, j=4,$ or $i=4,6,j=5,$ or $i=5,j=6,$
                and 
                 $$ [C_i] Q_1 [C_j]=0,$$
                 otherwise.                    
             
             Thus,
             $$2=   \langle c_1(X_1),  [C_j]  \rangle  = PD(c_1(X_1)) Q_1 [C_j]=
                \begin{cases}
                a_2, & j=1,  \\
                a_1+a_3,& j=2, \\       
                  a_2+a_4, &j=3, \\
                   a_3+a_5, & j=4,  \\        
                   a_4+a_6, & j=5, \\
                                   a_5, & j=6.
                 \end{cases}   $$
                    
               Thus, $a_1= a_2=a_5=a_6=2,$ $a_3=a_4=0$ and
                $$PD(c_1(X_1))= 2[C_1]+2[C_2]+2[C_5]+2[C_6].$$
               Therefore,
                                $$(PD(c_1(X_1)))^2=PD(c_1(X_1)) Q_1PD(c_1(X_1)) =16.$$
                           
                   Analogously,
                                $$(PD(c_1(X_2)))^2=8.$$

               \end{itemize}      
               
               Finally,
               $$   \theta(   \xi_1)=2,  \theta (   \xi_2)=-2,$$
               and we conclude that the contact structures $   \xi_1$ and $   \xi_2$ are not contactomorphic.
                     
The proof of Theorem  \ref{thm:overtwisted} is completed.

\end{proof}

%

\begin{remark}  $S^1 \times S^2$ is obtained from $T^2\times [0,1]$ by collapsing  the  tori $T^2\times \{0\}$ and $T^2\times \{1\}$ along circles of linear slopes $v_0$ and $v_1$. 
That is, $v_1=v_0$ or $v_1=-v_0.$ Following the construction by Lerman, we conclude that in the first case the moment cone is defined by $t_1=0, t_2=(2n-1)\pi$, $n\geq1,$ while in the second case the moment cone is defined by the numbers $t_1=0, t_2=2n\pi,$ $n\geq1.$ 

If $t_1=0, t_2= \pi$, then the moment cone is the half-plane and it corresponds to the unique tight contact structure given as the kernel of the contact form
$\alpha_{st}=hdt+\frac{i}{4}(zd\bar{z}-\bar zdz).$ See Section \ref{sec:tight} for the details.

If $t_1=0, t_2=2\pi,$ then the moment cone is the whole plane and it corresponds to the contact structure given as the kernel of the following contact form
$$\alpha_{ot}=-(1-3\cos 2\theta) dt-\sqrt{6} \cos\theta \sin 2\theta d\phi,$$
where $t \in S^1$ and $(\theta,\phi) \in [0,\pi] \times \mathbb R/2\pi\mathbb Z$, with a toric action
that rotates $t$ and $\phi$ coordinates. 
 The moment map is
 $$H(t, \theta,\phi)=-(1-3\cos 2\theta, \sqrt{6} \cos\theta \sin 2\theta )$$
 
 and the moment map image is a closed curve. Therefore, the moment cone is the whole space and the given contact structure has to be overtwisted.
 
 From the classification of contact toric structures it follows that the contact structure $\ker\alpha_{ot}$ is obtained by performing the half-Lutz twist to the unique tight contact structure $\ker \alpha_{st}$ on $S^1 \times S^2$.

We remark that the contact structure $\ker\alpha_{ot}$ is previously introduced by Taubes in \cite{Tau}, where he observed pseudoholomorphic curves on the symplectization of this contact structure.

\end{remark}

\section{Proof of Theorem \ref{thm:toric_is_plumbing}} \label{sec:toric_is_plumbing}
Our goal in this section is to prove Theorem~\ref{thm:toric_is_plumbing}, that every contact toric 3-manifold with non-free action can be realized as the concave boundary of a symplectic linear plumbing of disk bundles over spheres. By Theorem~\ref{theorem Lerman class}, it suffices to show that every $(Y(t_1,t_2),\xi_{t_1,t_2})$ can be realized as such a concave boundary, and by Remark~\ref{rem:lens_space}, we may assume $t_1=0$. We will prove this first for a subset of possible values for $t_2$ and gradually build up to the general case.

\begin{lemma}\label{lemma:algorithm}
A contact toric 3-manifold 
$(Y(t_1,t_2),\xi_{t_1,t_2})$, where
 $t_1=0$ and  $0<t_2\leq  \pi/2$  can be realised as a concave boundary of some linear plumbing $(s_1, \ldots, s_n)$.
\end{lemma}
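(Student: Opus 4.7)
The moment cone of $(Y(0, t_2), \xi_{0, t_2})$ is spanned by the rays $(1, 0)$ and $(l, k)$, where $(l, k) \in \mathbb{Z}^2$ is the primitive vector parallel to $(\cos t_2, \sin t_2)$; the hypothesis $0 < t_2 \leq \pi/2$ forces $l \geq 0$, $k \geq 1$, and $\gcd(l, k) = 1$. By Theorem~\ref{thm:plumbing_is_toric} together with the formula (\ref{eq:rays}), my plan is to exhibit a plumbing $(s_1, \ldots, s_n)$ with at least one $s_i \geq 0$ whose rays $R_1 = (-1, s_1)$ and $R_2 = A_2 \cdots A_{n-1}(s_n, -1)$ span a cone $SL(2,\mathbb{Z})$-equivalent to $\mathrm{cone}((1, 0), (l, k))$.

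First I dispose of the base case $l = 0$ (so $(l, k) = (0, 1)$ and $t_2 = \pi/2$): the plumbing $(0, 0)$ gives $R_1 = (-1, 0)$ and $R_2 = (0, -1)$, which $-I \in SL(2,\mathbb{Z})$ carries to $\mathrm{cone}((1, 0), (0, 1))$. For the remaining case $l \geq 1$, I will fix $s_1 = \lceil (k-1)/l \rceil \geq 0$, so that the matrix $M = \bigl[\begin{smallmatrix}-1 & 0 \\ s_1 & -1\end{smallmatrix}\bigr] \in SL(2,\mathbb{Z})$ sends $(1, 0) \mapsto R_1 = (-1, s_1)$. The requirement that $M$ also carry $(l, k)$ to $R_2$ pins down
\[
R_2 \;=\; M(l, k)^T \;=\; \bigl(-l,\; s_1 l - k\bigr),
\]
which is a primitive vector since $\gcd(l, k) = 1$. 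The problem therefore reduces to realizing this specific primitive vector as the product $A_2 \cdots A_{n-1}(s_n, -1)$ for a suitable choice of integer weights.

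To find the weights, I iteratively apply $A_j^{-1} = \bigl[\begin{smallmatrix} 0 & 1 \\ -1 & -s_j \end{smallmatrix}\bigr]$, whose action $(p, q) \mapsto (q, -p - s_j q)$ is a step of the Euclidean algorithm on the pair of coordinates: at each stage I choose $s_j \in \mathbb{Z}$ so that $|{-p - s_j q}| < |q|$, strictly decreasing the absolute value of the second coordinate. Since the coordinates remain coprime throughout, after finitely many steps we reach a vector with second coordinate $\pm 1$; if this equals $-1$ we terminate at $(s_n, -1)$, while if it equals $+1$ one additional unimodular step (with $s_j = 1 - p$) flips the sign and produces $(1, -1)$, allowing us to set $s_n = 1$. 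Reading off the weights used and combining with $s_1$ produces the desired plumbing $(s_1, \ldots, s_n)$.

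The main technical hurdle is to verify that the termination can always be arranged at second coordinate exactly $-1$, handling the edge case where the Euclidean reduction passes through a vector of the form $(p, 0)$; in that event a short sequence of extra unimodular sign-pivots, made available by the fact that $\gcd = 1$ is preserved throughout, returns the vector to the required form $(s_n, -1)$. Once this is done, the plumbing has $s_1 \geq 0$, hence meets the hypothesis of Theorem~\ref{thm:plumbing_is_toric}, and by construction its concave contact boundary has moment cone $SL(2,\mathbb{Z})$-equivalent to $\mathrm{cone}((1, 0), (l, k))$, thereby realizing $(Y(0, t_2), \xi_{0, t_2})$ as claimed.
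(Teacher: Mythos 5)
There is a genuine gap. Your argument establishes only that the two extremal rays of the plumbing's moment cone can be arranged to be $(1,0)$ and $(l,k)$ after an $SL(2,\mathbb Z)$ change of basis, and then concludes that the boundary is $(Y(0,t_2),\xi_{0,t_2})$. But the pair of rays does not determine the contact toric manifold: the boundary of the plumbing is $(Y(0,t_2+2\pi m),\xi_{0,t_2+2\pi m})$ for some $m\geq 0$ depending on the total angle swept by the moment image, and for $m\geq 1$ this is an overtwisted structure (Theorem~\ref{thm:tight-ot}), hence not contactomorphic to the tight $\xi_{0,t_2}$ even though it has the same rays. A concrete illustration from the paper itself: the plumbings $(0,0)$ and $(0,0,0,0,0,0)$ have identical rays $R_1=(-1,0)$, $R_2=(0,-1)$ (since $A_j=\bigl[\begin{smallmatrix}0&-1\\1&0\end{smallmatrix}\bigr]$ has order $4$), yet their concave boundaries are respectively the tight $S^3$ ($t_2=\pi/2$) and the overtwisted $(S^3,\xi_1)$ ($t_2=5\pi/2$). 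Your unconstrained Euclidean reduction, which freely chooses each $s_j$ to shrink $|{-p-s_jq}|$ and can output $s_j\in\{0,-1\}$ or positive values for $j\geq 2$, gives you no control over this winding, so the construction may land on the wrong contact structure.

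The paper closes exactly this gap by normalizing the expansion: it runs the continued fraction algorithm for $k/l$ so that $s_1=\lfloor k/l\rfloor\geq 0$ and $s_2,\dots,s_n\leq -2$ (setting $s_{j+1}=-\lfloor 1/r_j\rfloor-1$ at each step), and then invokes \cite[Theorem 5.3]{MNRSTW}, which says that under precisely these sign conditions the moment cone is convex, i.e.\ the boundary is the tight structure with $t_2\leq\pi/2$. To repair your proof you would need to add this normalization (or some other argument bounding the total angle of the glued L-shapes), and also actually carry out the termination argument you defer --- the paper does this by showing the denominators $l_j$ strictly decrease. Your secondary worry about the reduction passing through a vector $(p,0)$ in fact cannot occur once the weights are normalized, since coprimality forces the second coordinate to reach $\pm 1$ before $0$; but as written, both the normalization and the termination are missing, and the first of these is essential rather than cosmetic.
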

\begin{proof}
If $t_2=\pi/2$ then the moment cone is spanned by the rays pointing out of the origin $(1,0)$ and $(0,1)$. By performing the transformation
$\begin{bmatrix}
 -1 & 0 \\
0 &  -1
\end{bmatrix}$ we obtain the rays given by the equation (\ref{eq:rays}) for $s_1=s_2=0.$ 
Therefore, the corresponding contact toric manifold can be realised as a concave boundary of the plumbing $(0,0).$ Note that this is precisely the standard contact toric sphere $S^3.$

Suppose $t_2< \pi/2.$ Denote the corresponding ray by $(l,k),$ where $l,k>0.$ Then the rays $(1,0)$ and $(l,k)$ bound a convex moment cone (and the corresponding manifold is diffeomorphic to a lens space $L(k,l)$).
As explained in the proof of \cite[Theorem 5.1.]{MNRSTW}, if
 \begin{equation} \label{eq:con_fraction}
  k/l=s_1-\frac{1}{s_2-\frac{1}{\cdots-\frac{1}{s_n}}},
 \end{equation} 
then the moment cone corresponding to the boundary of the plumbing $(s_1, \ldots, s_n)$ is spanned by the rays $(1,0)$ and $  \pm (l,k).$
 Moreover, according to \cite[Theorem 5.3.]{MNRSTW}), if
$s_1 \geq0$ and $s_2,\ldots, s_n \leq-2$ then the associated moment cone is convex  and thus, the contact structure on the boundary is tight. This condition is relevant since $-k/-l=k/l,$
however, the rays $(1,0)$ and $(-l,-k)$ bound a concave moment cone, and,  the rays $(1,0)$ and $(l,k)$ may also span an angle  $t_2>2 \pi $. 
Therefore, it is enough to find numbers $s_1,\ldots, s_n$ such that relation (\ref{eq:con_fraction}) holds, where $s_1 \geq0, s_2,\ldots, s_n \leq-2.$

Suppose $l=1$. Then, the contact toric structure on $L(k,1)$ is determined by the moment cone that is spanned by the rays $(1,0)$ and $(1,k)$. The corresponding contact toric manifold can be realised  as the boundary of the plumbing $(k-1,-1).$ Namely, the rays of the corresponding L-shape are given by the directions $(-1,k-1)$ and $(-1,-1),$
pointing out of the origin, and the transformation
$\begin{bmatrix}
 -1 & 0 \\
1-k &  -1
\end{bmatrix}$ maps these rays to the rays $(1,0)$ and $(1,k).$

Suppose $l>1.$ We now present an algorithm to find suitable numbers $s_1,\ldots, s_n. $ Note that we have slightly different requirements than typical continued fraction expansions because we require $s_1\geq 0$ and $s_2,\dots, s_n\leq -2$.

Define
$$s_1= \lfloor{k/l}\rfloor,$$
where $\lfloor{ \cdot}\rfloor, $ denotes the integer part of the number. Since $k,l>0$, obviously $s_1\geq0. $

Next, set $r_1=k/l-s_1.$ Then $0 < r_1<1$ and 
$$k/l=s_1- \frac{1}{-1/r_1}.$$
Define
$$s_2= -\lfloor{1/r_1}\rfloor-1.$$ 
Since $1/r_1>1$, it holds $s_2\leq-2.$ Set $r_2=-1/r_1-s_2.$ If $r_2=0$ we are done. Otherwise, $0<r_2<1$ and
$$-1/r_1=s_2-\frac{1}{-1/r_2}.$$
We continue inductively by defining
$$s_{j+1}= -\lfloor{1/r_{j}}\rfloor-1,$$
for all $j=2, \ldots, $ where  $r_{j}=-1/r_{j-1}-s_j.$ 
Analogously as above ,  $s_{j+1}\leq-2$ and $0  \leq r_j<1.$
The process  terminates if there exists $n\in \mathbb N $ such that $r_j>0,$ for all $j=1,\ldots, n-1,$ and $r_n=0,$ i.e.   $1/r_{n-1}$ is an integer number. In that case we define
$$s_n=-1/r_{n-1}.$$
 Since $r_{n-1}<1,$ then $s_n \leq-2.$
 
It is now left to prove that this process terminates.
Denote by $1/r_j=k_j/l_j.$ Obviously $k_j,l_j>0.$ Let us show $l_{j+1}<l_j,$ for all $j\geq0, $ where $l_0=l.$ First, $l_{1}$ is precisely equal to $k \mod l$. The rest is always strictly less than the denominator, therefore 
$l_{1}<l.$ Next, for $j>0$, $l_{j+1}$ is equal to $l_j- (k_j  \mod l_j).$ This is because $s_{j+1}= -\lfloor{1/r_{j}}\rfloor-1.$ Therefore, $l_{j+1}<l_j,$ for all $j\geq0.$ Since $l_j>0$ and it is always an integer number it follows that  
 there exists  $n\in \mathbb N $ such that $l_{n-1}=1.$ Then 
$1/r_{n-1}$ is an integer number and $s_n$ is also well define.

 The reader can check that the relation (\ref{eq:con_fraction}) holds.
\end{proof}

\begin{lemma}\label{lemma:sharp}
Suppose $t_1=0$ and $\pi/2<t_2<\pi$. Then, there exists $0<t_2'<\pi/2$ such that $(Y(t_1,t_2),\xi_{t_1,t_2})$ is equivariantly contactomorphic to  $(Y(t_1',t_2'),\xi_{t_1',t_2'})$, for  $t_1'=0$.
\end{lemma}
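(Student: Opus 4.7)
The plan is to exploit the fact, used already in the proof of Theorem \ref{thm:tight}, that equivariant contactomorphism classes of contact toric manifolds $(Y(t_1,t_2),\xi_{t_1,t_2})$ are determined by their moment cones up to $SL(2,\mathbb{Z})$-action. I would simply exhibit an explicit element of $SL(2,\mathbb{Z})$ that rotates the given moment cone so that both of its rays lie in the closed first quadrant and the second ray has angle strictly between $0$ and $\pi/2$.

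For $t_1=0$ and $\pi/2<t_2<\pi$, the vector $(\cos t_2,\sin t_2)$ has negative first coordinate and positive second coordinate, so it is proportional to some $(l,k)\in\mathbb{Z}^2$ with $l<0$ and $k\geq 1$. The associated moment cone is therefore convex and spanned by the rays $(1,0)$ and $(l,k)$. I would then apply the shear
$$A_n=\begin{pmatrix} 1 & n \\ 0 & 1 \end{pmatrix}\in SL(2,\mathbb{Z}),$$
which fixes $(1,0)$ and sends $(l,k)$ to $(l+nk,k)$. For any integer $n>-l/k$, we have $l+nk\geq 1$, placing the transformed second ray in the open first quadrant. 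Setting $t_1'=0$ and $t_2'=\arctan\bigl(k/(l+nk)\bigr)$, we obtain $0<t_2'<\pi/2$, and $A_n$ induces an equivariant contactomorphism from $(Y(t_1,t_2),\xi_{t_1,t_2})$ to $(Y(t_1',t_2'),\xi_{t_1',t_2'})$, exactly as required.

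There is no substantive obstacle in this argument: the only verification needed is that the new angle $t_2'$ strictly avoids both $0$ and $\pi/2$, which is immediate from $k\geq 1$ and $l+nk\geq 1$. The entire content is simply a choice of shear parameter, riding on the $SL(2,\mathbb{Z})$-invariance of the classification of Theorem \ref{theorem Lerman class}.
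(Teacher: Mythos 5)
Your proof is correct and matches the paper's argument essentially verbatim: both apply the shear $\begin{pmatrix}1 & n\\ 0 & 1\end{pmatrix}$ fixing the ray $(1,0)$ and choosing $n$ large enough to move the second ray into the open first quadrant. No differences worth noting.
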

\begin{proof}
It is enough to find an $SL(2, \mathbb Z)$ transformation between the corresponding cones.
The rays of the moment cone defined by the numbers $t_1=0$ and $\pi/2 <t_2<\pi$ are $(1,0)$ and $(-m,n)$, for some $m,n>0.$ Then, the transformation
$\begin{bmatrix}
 1 & k \\
0 &  1
\end{bmatrix},$ where $k>0$ satisfies $kn-m>0$ maps the rays $(1,0)$ and $(-m,n)$ to the rays $(1,0)$ and $(-m+kn,n)$, respectively. The angle between the positive part of the $x$-axis and the later ray 
is $0<t_2'< \pi/2.$
\end{proof}

\begin{lemma}\label{lemma:two_zeros}
Suppose that $(Y(t_1,t_2),\xi_{t_1,t_2})$
is realised as a concave contact toric boundary of the linear plumbing $(s_1, \ldots, s_n)$ via~\cite[Theorem 4.1]{MNRSTW}. Then
$(Y(t_1,t_2+\pi),\xi_{t_1,t_2+\pi})$
can be realised as a concave contact boundary of the plumbing $(s_1, \ldots, s_n,0,0)$.
\end{lemma}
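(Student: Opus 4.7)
The plan is to directly apply the ray formula (\ref{eq:rays}) to the extended plumbing $(s_1,\ldots,s_n,0,0)$, observe that its moment cone has the same first ray as the original and an antipodal second ray, and conclude that the spanned angle increases by exactly $\pi$.

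First, the new plumbing satisfies the hypothesis of Theorem \ref{thm:plumbing_is_toric} since the two appended entries are non-negative, so by the construction recalled in Section \ref{sec:mnrstw} its boundary carries a concave contact toric structure whose moment cone is spanned by rays $R_1', R_2'$ determined by (\ref{eq:rays}). The first ray is plainly unchanged, $R_1' = (-1,s_1) = R_1$. For the second ray, writing $A_j = \begin{bmatrix}-s_j & -1 \\ 1 & 0\end{bmatrix}$ and using $s_{n+1} = s_{n+2} = 0$, I would compute
\[
A_{n+1}\begin{bmatrix}0 \\ -1\end{bmatrix}
= \begin{bmatrix}1 \\ 0\end{bmatrix},
\qquad
A_n\begin{bmatrix}1 \\ 0\end{bmatrix}
= \begin{bmatrix}-s_n \\ 1\end{bmatrix}
= -\begin{bmatrix}s_n \\ -1\end{bmatrix},
\]
so that multiplying on the left by $A_2\cdots A_{n-1}$ (interpreted as the empty product when $n=2$) yields $R_2' = -A_2\cdots A_{n-1}(s_n,-1)^T = -R_2$.

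Thus the new cone is obtained from the old one by fixing $R_1$ and flipping $R_2$ to its antipode. After an $SL(2,\mathbb Z)$ transformation placing $R_1$ at angle $t_1$, the original cone is the wedge of rays with angle in $[t_1,t_2]$ by Definition \ref{def:Yt1t2}, and the new $R_2'$ sits at angle $t_2+\pi$, so the new cone is the wedge of rays with angle in $[t_1,t_2+\pi]$. This is the moment cone of $(Y(t_1,t_2+\pi),\xi_{t_1,t_2+\pi})$, and by Theorem \ref{theorem Lerman class} the concave boundary of the extended plumbing is equivariantly contactomorphic to it, as claimed.

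I do not foresee any real obstacle beyond the two-matrix calculation; the only care needed is that the counterclockwise orientation convention in Definition \ref{def:Yt1t2} makes the flipped second ray correspond to $t_2+\pi$ rather than $t_2-\pi$, which is automatic because the new wedge must contain (and strictly extend) the original one.
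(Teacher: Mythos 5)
Your matrix computation is correct: for the extended plumbing $(s_1,\ldots,s_n,0,0)$ the formula (\ref{eq:rays}) gives $R_1'=R_1$ and $R_2'=A_2\cdots A_{n+1}(0,-1)^T=-R_2$, and this is a clean, more computational way to locate the new second ray than the paper's gluing picture. But the final identification has a genuine gap: the \emph{directions} of the two rays do not determine the contact toric manifold. The paper stresses exactly this point in the proof of Lemma~\ref{lemma:algorithm}: the same pair of rays bounds the cone of $Y(0,t_2)$ for $t_2$ equal to the expected angle plus any nonnegative multiple of $2\pi$, and these contact structures are genuinely different (they differ by full Lutz twists, cf.\ Proposition~\ref{prop: Lutz}). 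So from $R_2'=-R_2$ you only conclude $t_2'\equiv t_2+\pi \pmod{2\pi}$. Your containment argument (``the new wedge must contain and strictly extend the original one'') rules out $t_2-\pi$ but not $t_2+3\pi$, $t_2+5\pi$, and so on; once the total angle exceeds $2\pi$ the moment cone as a subset of $\R^2$ is all of $\R^2$, so containment of cones carries no further information.

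What is missing is an upper bound on the angle contributed by the two appended $(0,0)$ pieces, and this is precisely the content of the paper's proof: the union $L_n\cup A_{n+1}(L_{n+1})$ of the two new L-shapes is bounded by the antipodal rays $(1,0)$ and $(-1,0)$, hence spans exactly $\pi$, and this persists after applying the linear gluing maps $A_2\cdots A_n$ (the determinant of the two bounding rays remains zero while the region remains a nondegenerate wedge). Equivalently, each $(0,0)$ L-shape spans exactly $\pi/2$, so appending two of them increases $t_2-t_1$ by exactly $\pi$, neither more nor less. With that one additional observation your argument closes and agrees with the paper's conclusion.
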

\begin{proof}  
Suppose the linear plumbing $(s_1,\ldots, s_n)$ is obtained via the construction described in Section~\ref{sec:mnrstw}, by gluing together $n-1$ L-shapes corresponding to pairs
$$(s_1,0),\ldots, (s_{i-1},0),(s_i,s_{i+1}),(0,s_{i+2}),\ldots, (0,s_n)$$
where $s_i\geq 0$.
We decompose the linear plumbing $(s_1, \ldots, s_n,0,0)$ by gluing together $n+1$ L-shapes 
\begin{equation}\label{sequence of L-shapes}
(s_1,0),\ldots, (s_{i-1},0),(s_i,s_{i+1}),(0,s_{i+2}),\ldots, (0,s_n), (0,0), (0,0).
\end{equation}
This decomposition satisfies the requirements to apply the construction of~\cite[Theorem 4.1]{MNRSTW}, since in every pair $(a,b)$ at least one of $a$ and $b$ is non-negative, thus we can perform the gluing to obtain the plumbing with concave boundary. We compare the moment image of the plumbing $(s_1,\dots, s_n)$ with that of $(s_1,\dots, s_n,0,0)$.

The last L-shape, $L_{n+1}$ corresponding to the pair $(0,0)$ is glued to the next L-shape $L_n$ (also corresponding to the pair $(0,0)$), via $A_{n+1}=\begin{bmatrix}
0 & -1 \\
1 & 0 
\end{bmatrix}$. $L_n\cup A_{n+1}(L_{n+1})$ has concave contact boundary spanned by the rays $(1,0)$ and $(-1,0).$ In particular, the determinant of these rays is zero and the angle between them is $\pi$. After performing all the gluings, the moment image of the plumbing $(s_1,\dots, s_n,0,0)$ will be obtained by gluing $A_2\ldots A_n(L_n\cup A_{n+1}(L_{n+1}))$ to the moment image of the plumbing $(s_1,\dots, s_n)$.

In the final moment map image, the angle between the rays that bound the piece $A_2\cdots A_n(L_n\cup A_{n+1}(L_{n+1}))$ is still $\pi,$
as linear transformations preserve determinant.

\end{proof}

\begin{proof}[Proof of Theorem \ref{thm:toric_is_plumbing}]
By Theorem~\ref{theorem Lerman class}, any contact toric 3-manifold is $(Y(t_1,t_2),\xi_{t_1,t_2})$ for some $0\leq t_1<2\pi$, $t_2>t_1$, with $\tan(t_i)$ rational when defined.
We may perform suitable $SL(2, \mathbb Z)$ transformation to get $t_1=0,$ i.e. that the first ray of the moment cone is equal to the positive part of the $x$-axis. We now divide the proof into the following cases.

\begin{itemize}

 \item If $t_2 \leq \pi/2$ then we apply Lemma \ref{lemma:algorithm}
and therefore $(Y,\xi)$ can be realised as a concave contact boundary of the linear plumbing $(s_1,\ldots, s_n).$

\item  If $\pi/2 <t_2<\pi$ then, according to Lemma \ref{lemma:sharp}, we may assume that $t_2<\pi/2$ and then apply  Lemma \ref{lemma:algorithm}.

\item If  $t_2=\pi $ then $(Y, \xi)$ can be realised as a concave boundary of the plumbing  $(0,0,0).$

\item If $t_2> \pi$ then there exists $t_2'<\pi$ such that $t_2=t_2'+k\pi,$ for some $k\geq1. $ Then, as explained above, the contact toric manifold classified by $t_1=0$ and $t_2'$ can be realised as a concave contact boundary of some linear plumbing $(s_1, \ldots, s_n).$
We now inductively apply Lemma \ref{lemma:two_zeros} and conclude that the contact toric manifold classified by $t_1=0$ and $t_2=t_2'+k \pi$ can be realised as a concave boundary of the linear plumbing
$(s_1\ldots, s_n, \underset{2k}{\underbrace{0,\ldots,0}}).$ 

\end{itemize}
\end{proof}

 \begin{remark}
 Note that the linear plumbing whose boundary is a certain contact toric manifold is not unique. For instance, a blow up of the intersection point of the two adjacent spheres with self-intersection numbers $s_i$ and $s_{i+1}$ in the base of the plumbing 
changes $(s_1,  \ldots, s_i,s_{i+1},  \ldots, s_n)$  into $(s_1,  \ldots, s_{i}-1,-1,s_{i+1}-1,  \ldots, s_n).$
The corresponding contact toric structure on the boundary remains the same.
For more details on the topology of this transformation we refer to \cite{Neumann}. For the toric description, we refer to \cite[Example 5.7]{MNRSTW}.

  \end{remark}


\begin{thebibliography}{9}

 \bibitem[AM12]{AM} 
\uppercase{Abreu, M., Macarini, L.} 
 \textit{ Contact homology of good toric contact manifolds.} Compositio Math, Vol 148,  (2012), 304-334.
 
 \bibitem[BM92]{BM92} 
\uppercase{Banyaga, A., Molino, P.}
  \textit{G\'eom\'etrie des formes de contact compl\'etement int\'egrables de type toriques.} 
 S\'eminaire Gaston Darboux de G\'eom\'etrie et Topologie Diff\'erentielle, 1991–1992,  Univ. Montpellier II, Montpellier (1993), 1–25.

 \bibitem[BM96]{BM96} 
\uppercase{Banyaga, A., Molino, P.}
 \textit{ Complete integrability in contact geometry.} Penn State preprint PM 197, (1996).


 
  \bibitem[BG00]{BG} 
\uppercase{Boyer, C. P., Galicki, K.}  
 \textit{ A note on toric contact geometry.} J. of Geom. and Phys. Vol 35,  (2000) 288–298.

     

 
 \bibitem[Del88]{Delzant} 
\uppercase{Delzant, T.}
  \textit{Hamiltoniens p\'eriodiques et image convexe de l’application moment.} 
  Bull. Soc. Math. France, Vol 116, (1988), 315–339. 
  
  
   \bibitem[Eli89]{Eliashberg} 
\uppercase{  Eliashberg, Y. }
    \textit{ Classification of overtwisted contact structures on 3-manifolds.} Invent Math, Vol  98, (1989) 623–637.   
    
  
  \bibitem[Gei09]{Geiges} 
\uppercase{Geiges, H.}
  \textit{Introduction to Contact Topology}
  Cambridge University Press (2009).
  
  
   \bibitem[Gom98]{Gompf} 
\uppercase{ Gompf, R.}
  \textit{ Handlebody construction of Stein surfaces.}, Ann. of Math. Vol 148, (1998), 619–693.  
  
  
     \bibitem[GS99]{GS} 
\uppercase{ Gompf, R., Stipsicz, A.}
  \textit{ 4-manifolds and Kirby calculus.}, Graduate Studies in Mathematics
Vol 20,
American Mathematical Society (1999).    


     
   \bibitem[H00]{Honda} 
  \uppercase{Honda, K.} 
    \textit{On the classification of tight contact structures I.} 
     Geom. Topol. Vol 4, (2000), 309-368.
          

 \bibitem[K97]{Kanda} 
  \uppercase{Kanda, Y.} 
    \textit{The classification of tight contact structures on the 3–torus.} 
     Comm. in Anal. and Geom. Vol 5, (1997), 413–438.
          



    
  \bibitem[Ler01]{Lerman1} 
\uppercase{Lerman, E.}
  \textit{Contact cuts.} 
   Israel J. Math. Vol 124, (2001),  77–92. 
 
 
     
  \bibitem[Ler03]{Lerman2} 
\uppercase{Lerman, E.}
  \textit{Contact toric manifolds.}  J. Symplectic Geom. Vol 1, (2003),  785–828. 
  
       
       \bibitem[Ma15]{M15} 
\uppercase{Marinkovi\'c, A.}
  \textit{Fillability of contact toric manifolds.} Period. Math. Hung. Vol 73, (2016), 16–26. 

  \bibitem[MNRSTW25]{MNRSTW} 
\uppercase{Marinkovi\'c, A., Nelson, J., Rechtman, A.,Starkston, L., Tanny, S., Wang, L.}:
  \textit{Properties of contact toric structures and concave boundaries of linear plumbings}, arXiv preprint arXiv:2501.08451.

       \bibitem[McD91]{McD} 
\uppercase{McDuff, D.}
  \textit{The local behaviour of holomorphic curves in almost complex 4-manifolds.} J. Differential Geometry Vol 34, (1991), 143–164. 


  
         \bibitem[Mi65]{Milnor} 
\uppercase{  Milnor, J. W.} 
 \textit{Topology from the Differentiable Viewpoint.} The University Press
of Virginia, Charlottesville (1965).       
       

  
        
 \bibitem[Neu81]{Neumann} 
 \uppercase{Neumann. W. D.}
   \textit{A Calculus for Plumbing Applied to the Topology of Complex Surface Singularities and Degenerating Complex Curves} 
Transactions of the AMS
Vol. 268,  (1981),  299-344.        
      
 \bibitem[T02]{Tau} 
 \uppercase{ Taubes, C. H. }
   \textit{A compendium of pseudoholomorphic beasts in $ \mathbb R  \times S^1 \times S^2$.} 
  Geom. Topol. Vol 6, (2002) 657-814.  
       
     



\end{thebibliography}
\end{document}